\newtheorem{thm}{Theorem}[section]
\newtheorem{bigthm}{Theorem}
\newtheorem{prop}[thm]{Proposition}
\newtheorem{defn}[thm]{Definition}
\newtheorem{lem}[thm]{Lemma}
\newtheorem{oss}[thm]{Remark}
\newtheorem{ex}[thm]{Example}
\numberwithin{equation}{section} 
\newcommand{\dd}{\mathrm{d}}
\newcommand\cA{{\mathcal A}}
\newcommand\cB{{\mathcal B}}
\newcommand\cC{{\mathcal C}}
\newcommand\cD{{\mathcal D}}
\newcommand\cE{{\mathcal E}}
\newcommand\cF{{\mathcal F}}
\newcommand\cH{{\mathcal H}}
\newcommand\cJ{{\mathcal J}}
\newcommand\cL{{\mathcal L}}
\newcommand\cN{{\mathcal N}}
\newcommand\cT{{\mathcal T}}
\newcommand\cU{{\mathcal U}}
\newcommand\bB{{\mathbb B}}
\newcommand\bC{{\mathbb C}}
\newcommand\bE{{\mathbb E}}
\newcommand\bN{{\mathbb N}}
\newcommand\bP{{\mathbb P}}
\newcommand\bR{{\mathbb R}}
\newcommand\bT{{\mathbb T}}
\newcommand\bV{{\mathbb V}}
\newcommand\bZ{{\mathbb Z}}
\newcommand\ve{\varepsilon}
\newcommand\Id{\text{Id}}
\newcommand\frh{{\mathfrak h}}
\newcommand{\frm}{\mathfrak{m}}
\newcommand\fC{{\mathbf{C}}}
\newcommand\fn{{\mathfrak{n}}}
\newcommand{\supp}{\operatorname{supp}}
\newcommand \re {{%
\operatorname{Re}
}}
\newcommand \im {{%
\operatorname{Im}
}}
\newcommand{\marginnoter}[1]
           {\mbox{}\marginpar{\tiny\raggedright\hspace{0pt}{{\bf Roberto}$\blacktriangleright$  {\color{blue} #1}}}}
\def\@tocline#1#2#3#4#5#6#7{\relax
  \ifnum #1>\c@tocdepth 
  \else
    \par \addpenalty\@secpenalty\addvspace{#2}%
    \begingroup \hyphenpenalty\@M
    \@ifempty{#4}{%
      \@tempdima\csname r@tocindent\number#1\endcsname\relax
    }{%
      \@tempdima#4\relax
    }%
    \parindent\z@ \leftskip#3\relax \advance\leftskip\@tempdima\relax
    \rightskip\@pnumwidth plus4em \parfillskip-\@pnumwidth
    #5\leavevmode\hskip-\@tempdima
      \ifcase #1
       \or\or \hskip 1em \or \hskip 2em \else \hskip 3em \fi%
      #6\nobreak\relax
    \hfill\hbox to\@pnumwidth{\@tocpagenum{#7}}\par
    \nobreak
    \endgroup
  \fi}
\begin{document}
\title[Limit theorems for 2d-partially hyperbolic systems]{Limit theorems for toral partially hyperbolic endomorphisms}
\author{Roberto Castorrini \textsuperscript{(1)}}
\address{{\tiny (1)} Scuola Normale Superiore, Piazza dei Cavalieri, 7, Pisa, 56126, Italy.}
\email{roberto.castorrini@gmail.com}
\author{Kasun Fernando \textsuperscript{(2)}}
\address{{\tiny (2)} Brunell University of London, Kingston Ln, Uxbridge UB8 3PH, UK.}
\email{{\tt kasun.fernando@brunel.ac.uk }}
\begin{abstract}
Under natural assumptions on the observable, we prove a Central Limit Theorem, a Berry-Esseen Theorem, and a quantitative Local Limit Theorem for a broad class of partially hyperbolic endomorphisms of the two-dimensional torus. Our results apply, but are not limited to, skew-products and their perturbations, and they remain valid even when the system admits multiple, though finitely many, absolutely continuous ergodic invariant measures.
\vspace{-10pt}
\end{abstract}
\date{\today}

\thanks{RC’s research was partially supported by the research project ‘Dynamics and Information Research Institute—Quantum Information, Quantum Technologies’ within the agreement between UniCredit Bank and Scuola Normale Superiore and by the research project PRIN 2017S35EHN of the MIUR. The authors acknowledge the UMI Group “DinAmicI” (\url{www.dinamici.org}) and the INdAM group GNFM. 
} 
\maketitle

\section{Introduction}\label{sec:intro}
Partially hyperbolic systems have emerged as a central object of interest in dynamical systems, yet they continue to pose significant analytical challenges. Their complexity lies in the presence of neutral Lyapunov exponents, which distinguish them from uniformly hyperbolic systems and make their behavior more intricate. Although the literature on ergodic properties of such systems is extensive (see, for example, \cite{ABV, BV, BuWilk}), many of the results are abstract and qualitative in nature, offering limited guidance for the analysis of concrete examples.

Skew-products (including skew-product fast-slow systems) are an important subclass of partially hyperbolic systems. These have been extensively studied. We refer the readers to \cite{Gou1, BuEs, KKM, SM} and references therein for more details. For systems that are not skew-products, from a quantitative standpoint, results are far less common. Notable exceptions are the series of work by De Simoi and Liverani, particularly \cite{DeLi2}, which rigorously analyses fast-slow systems in two dimensions, models of relevance in physics (cf. \cite{Liv}), and the work by Dolgopyat \cite{Dol1} where systems with an \textit{almost} Markov decomposition are considered. These studies contribute significantly to the theory of limit laws and quantitative ergodic properties, yet the technical machinery they rely on is often quite involved and does not easily generalise to broader classes of systems. 

A more versatile approach has emerged in recent years based on the spectral analysis of transfer operators. This method, central to works such as \cite{AGT} and \cite{F11}, enables a finer study of statistical properties in two-dimensional skew-products, canonical examples of partially hyperbolic maps. Building on this, \cite{CaLi} adapts techniques from uniformly hyperbolic dynamics to establish quasi-compactness for a more general class of partially hyperbolic systems, including fast-slow systems. Their results yield quantitative insights into ergodicity, mixing, and decay of correlations of such systems.

It is well known that the spectral gap of the transfer operator yields key statistical features, such as the existence of finitely many absolutely continuous invariant probabilities (acips), exponential decay of correlations, and, in suitable contexts, limit theorems such as the Central Limit Theorem (CLT) or Local Limit Theorem (LLT) (see \cite{BaGo}). However, when departing from uniformly hyperbolic, invertible, {or Markovian} settings, much less is known. In particular, for partially hyperbolic endomorphisms (noninvertible maps with both expanding and neutral directions) CLT results are scarce and typically confined to the special cases mentioned above.

The present paper contributes to this landscape by establishing the CLT, Barry-Esseen Theorem, and LLTs for a large class of partially hyperbolic endomorphisms on the two-dimensional torus. {In fact, we do the non-trivial task of verifying the conditions (appropriately adapted to our setting) stated in the functional analytic framework for the non-ergodic case, introduced in \cite{HeHe}}. 
So, our results apply not only when there is a unique absolutely continuous invariant measure, associated with a simple peripheral eigenvalue, but also in the more general case where multiple ergodic acips coexist.\\ 

\noindent\textbf{Organization of the paper}. In Section \ref{sec:results}, we introduce the systems under investigation and state the two main results: Theorem \ref{thmA} is the CLT with a rate of convergence result (Barry-Esseen Theorem); Theorem \ref{thmB} is an LLT with a quantitative estimates. Both theorems are proved by applying the abstract framework presented in Appendix \ref{app:AbstractCLT} to the two principal analytical tools of the paper: the transfer operator associated with the map, and its complex perturbation, known as the \textit{twisted transfer operator}. Accordingly, Sections \ref{sec:TraOp} and \ref{sec:twisted} are dedicated to studying the spectral properties of these operators respectively, and in particular, to proving that they are quasi-compact when acting on a space of functions contained within the standard Sobolev space $H^s(\bT^2)$. In Section \ref{sec:proof-thmA}, we combine the results obtained in the previous two sections to prove the main theorems. The paper concludes with an appendix that gathers some known, and some less known, technical results used throughout the text.

\section{The system and the results}\label{sec:results}

\subsection{Assumptions on the map} In this paper we consider the class of systems $F\in \cC^r(\bT^2,\bT^2)$ introduced in \cite{CaLi} and called SVPH ({\it strongly dominated vertical partially hyperbolic systems}).
A set of sufficient conditions for checking whether a system is an SVPH are provided in \cite[Lemma 2.2]{CaLi} and we refer to \cite[Section 2.2]{CaLi} for a detailed discussion, including examples, of the following assumptions.

Let $r \ge 4$ and $F:\mathbb{T}^2\mapsto \mathbb{T}^2$ be a surjective $\mathcal{C}^{r}$ local diffeomorphism of the form 
\begin{equation}
\label{the map F}
F(x,\theta)=(f(x,\theta),\theta +\omega(x,\theta)) \mod \bZ^2.
\end{equation}
We call $F$ an SVPH if $F$ satisfies the following properties \textbf{(A1)}--\textbf{(A5)} given below.\vspace{5pt}

\textbf{(A1)} For all $p\in\bT^2$ we have $\det(D_pF)>0$.
\vspace{5pt}

\textbf{(A2)} There exist $\chi_u\in (0,1)$, $\chi_c\in (0,1]$ and $ 0<\lambda^{-}_c<  1<  \lambda_c^+ < \lambda\le\Lambda$ such that, setting
\begin{equation}\label{eq:cone-def}
\begin{split}
&\mathbf{C}_u:= \lbrace (\xi, \eta)\in \cT_z\mathbb{T}^2  :  |\eta|\le \chi_u|\xi | \rbrace \\
&\mathbf{C}_c:= \lbrace (\xi, \eta)\in  \cT_z\mathbb{T}^2 :  |\xi|\le \chi_c |\eta | \rbrace,
\end{split}
\end{equation}
defining 
\begin{equation}\label{def of lamb^+ mu^+}
\begin{split}
&\lambda^-_n(z):= \inf_{v\in \bR^2\setminus \fC_c }\mathfrak{m}_{F^{n}}(z, v) \qquad \qquad\qquad\; \lambda^+_n(z):= \sup_{v\in \bR^2\setminus \fC_c }\mathfrak{m}_{F^{n}}(z, v),\\
&\lambda_{c,n}^-(z):= \inf_{v\in \fC_c\setminus{\lbrace 0 \rbrace}}\mathfrak{m}^*_{F^{n}}(z, v) \quad\quad\qquad\qquad\lambda_{c,n}^+(z)
:= \sup_{v\in \fC_c\setminus{\lbrace 0 \rbrace}}\mathfrak{m}^*_{F^{n}}(z, v),
\end{split}
\end{equation}
where $ \mathfrak{m}_F(z, v)=\frac{\|D_zF v\|}{\|v\|}$ and $\mathfrak{m}^*_F(z, v)=\frac{\|(D_zF)^{-1} v\|}{\|v\|}$
and letting $\lambda^-_n=\inf_z\lambda^-_n(z)$ and $\lambda^+_n=\sup_z\lambda^+_n(z)$ we assume the following:\\
There exist constants $C_\star\geq 1$ and $\iota_\star \in (0,1)$ such that, for all $z\in\bT^2$ and $n\in\bN\,,$\footnote{$A\Subset B$ means $\overline{A}\subset \mbox{int}(B)\cup \lbrace 0 \rbrace$.}
\begin{equation}
  \begin{aligned}
  D_zF\mathbf{C}_u &\subset \{(\xi,\eta): |\eta|\le \iota_\star\chi_u|\xi|\}\Subset \fC_u; \\ (D_zF)^{-1}\mathbf{C}_c &\subset \{(\xi,\eta): |\xi|\le \iota_\star\chi_c|\eta|\}\,, \label{invariance of cone}
  \end{aligned}
\end{equation}
\begin{equation}
  \begin{aligned}
  C_\star^{-1}({\lambda_c^-})^n&\leq \lambda_{c,n}^-(z)\leq \lambda_{c,n}^+(z)\leq C_\star({\lambda_{c}^+})^n \;;\\ C_\star^{-1}\lambda^n&\leq\lambda^-_n(z)\leq  \lambda^+_n(z)\leq C_\star \Lambda^n \,.\label{partial hyperbolicity 2}
  \end{aligned}
\end{equation}
\vspace{5pt}

\textbf{(A3)} Let $\Upsilon$ be the family of closed curve $\gamma \in \cC^r(\bT, \bT^2)$ such that~\footnote{ As usual we consider equivalent two curves that differ only by a $\cC^r$ reparametrization. In the following, we will mostly use curves that are parametrised by vertical length.}
\begin{itemize}
\item[c0)] $\gamma'\neq 0$,\label{item c0}
\item[c1)]  $\gamma$ has homotopy class $(0,1),$\label{item c1}
\item[c2)] $\gamma'(t)\in \fC_c,$ for each $t\in \bT.$\label{item c2}
\end{itemize}
We assume that for each $\gamma\in\Upsilon$ there exist $\{\gamma_j\}_{i=1}^N\subset \Upsilon$ such that $F^{-1}(\Upsilon)=\{\gamma_j\}_{i=1}^N$.
\vspace{5pt}

\textbf{(A4)} Let
\begin{equation}\label{def of zetar}
\zeta_r:=6(r+1)!\,.
\end{equation}
We assume that $F$ satisfies the pinching condition
\begin{equation}\label{pinching condition}
(\lambda_c^+)^{\zeta_r}<\lambda_-.
\end{equation}
\vspace{2pt}

\textbf{(A5)} With the notation \eqref{the map F} we require, for each $p\in\bT^2,$
\[
\partial_xf(p)>\max \left\{ 2(1+\|\partial_x\omega\|_\infty),|\partial_\theta f(p)|\right\}.\vspace{8pt}
\]

From now on we set 
\begin{equation}\label{eq:mumax}
\lambda_c :=\max \{ \lambda_c^+,1/\lambda^-_c\}\ge  1.\vspace{10pt}
\end{equation}

\begin{ex}
Notable examples of SVPH are partially expanding systems with neutral or close to neutral central direction which includes:
\begin{itemize}[leftmargin=*]
\item Skew torus extensions over smooth expanding map (and small perturbations of it): $F(x,\theta)=(f(x), \theta+\omega(x))$ on $\bT^2$ where $f$ is expanding. 
\item Fast-slow partially expanding systems: $F_\ve(x,\theta)=(f(x,\theta), \theta+ \ve \omega(x,\theta))$ on $\bT^2$ where $\sup_{(x,\theta)\in \bT^2}\partial_x f(x,\theta)>1$ and $\ve>0$. 
\end{itemize}
{It is easy to see that the first example satisfies the assumptions in \cite[Lemma 2.2]{CaLi} and hence, an SVPH, provided the expansion is large enough.\footnote{If not, one can always consider a finite iterate of the map to ensure of having ``enough expansion".} For a detailed discussion of the second, see \cite[Section 10]{CaLi}.}
\end{ex}

{SVPHs have finitely many exponentially mixing acips provided that \textit{most} preimages are transversal.\footnote{A property that is related to \textit{accessibility} in the hyperbolic or partially hyperbolic settings (see \cite[Remark 4.3]{CaLi}).} To state this more precisely, we recall some definitions.}
\begin{defn}[Transversality of preimages]
\label{transversality}
Given $n\in \mathbb{N}$, $y\in \mathbb{T}^2$ and $z_1,z_2 \in F^{-n}(y)$, we say that  $z_1$ is  \textit{transversal} to $z_2$ (at time $n$) if $D_{z_1} F^n \mathbf{C}_u \cap D_{z_2} F^n \mathbf{C}_u=\lbrace 0 \rbrace$, and we write $z_1 \pitchfork_n z_2$ where $\fC_u$ is the {unstable cone} for $F$. 
\end{defn}
The following function measures the growth of non-transversal preimages: 
\begin{equation*}\label{NF1}
\mathcal{N}_{F}(n)=\sup_{y\in \mathbb{T}^2} \sup_{z_1\in F^{-n}(y)}\mathcal{N}_F( n, y, z_1).
\end{equation*}
where
\begin{equation*}\label{NF}
\mathcal{N}_{F}( n, y, z_1):=\sum_{\substack{z_2 \not{\pitchfork}_n z_1 \\ z_2\in F^{-n}(y)}}  |\det D_{z_2}F^{n}|^{-1}
\end{equation*}
for each $y\in \mathbb{T}^2$ and $z_1\in F^{-n}(y)$.

A related quantity (see \cite[Lemma 7.4]{CaLi}) which has the advantage to be submultiplicative in $n$ and upper semicontinuous in $F$, is the following: 
$$\widetilde{\mathcal{N}}_{F}(n)=\sup_{y\in \mathbb{T}^2} \sup_{L}\, \widetilde{\mathcal{N}}_F(n, y, L)$$
where 
\[
 \widetilde{\mathcal{N}}_{F}(n, y, L):=\sum_{\substack{z\in F^{-n}(y) \\ DF^{n}(z)\fC_{u}\supset L }}  |\det DF^{n}(z)|^{-1}.
\]
for each $y\in \mathbb{T}^2$ and line $L$ in $\mathbb{R}^2$ passing through the origin.  
\begin{oss}
For instance, suppose we have a skew-product over a linear expanding map $F(x,y)=(\ell x, y+ \omega(x))$ on $\bT^2$, with $\ell>1$. Then the above quantities are bounded by 1, since
\[
\sum_{\substack{z_2 \not{\pitchfork}_n z_1 \\ z_2\in F^{-n}(y)}}  |\det D_{z_2}F^{n}|^{-1}\le \sum_{z_2\in F^{-n}(y)} \frac{1}{\ell^n}=\ell^{-n}\sharp\{z_2\in F^{-n}(y), y \in \bT^2\} =1.
\]
\end{oss}
\vspace{5pt}

Our last assumption on the system is the following.\footnote{As $\widetilde \cN_F(n)$ is submultiplicative, it is enough to check the condition for some integer $n_0 \in \bN$ to have the bound for the limit.} 
\begin{equation}\label{main-hp}\tag{\textbf{A6}}
\lim_{n\to \infty}\widetilde{\mathcal{N}}_{F}(n)^{\frac 1n}<1.\vspace{10pt}
\end{equation}
\begin{oss}
Under this assumption, the growth of the number of non-transversal preimages is under sufficient control. In \cite{Tsu} it is proven that, for 2D partially hyperbolic systems, condition \eqref{main-hp} is generic in the $\cC^r$ topology.
\end{oss}

Under the above assumptions we have the following results on the existence of invariant measures for the systems. The proof, in much more detail, is contained in Proposition \ref{prop:spectral}.
\begin{lem}\label{lem:invariants}
Let $F$ be a SVPH such that \eqref{main-hp} holds. Then there exist $F-$invariant absolutely continuous (w.r.t.~Lebesgue measure $\frm$) probability measures $\{\mu_k\}_{k=1}^{\ell}$ with densities $\{\rho_k\}_{k=1}^\ell$ such that
\begin{enumerate}[leftmargin=*]
\item[(i)] each $\mu_k$ is an exponentially mixing physical measure,\footnote{That is, \[\frm\left(x\in \bT^2: \frac 1n \sum_{k=0}^{n-1}\delta_{F^k(x)}\to \mu_i\right)>0.\]}
\item[(ii)] each $\rho_k$ is a non-negative function in the Hilbert space $H^s(\bT^2)$, for some integer $s>1$, 
\item[(iii)] each couple $(\mu_k,\rho_m)$ satisfies $\mu_k(\rho_m) = \delta_{k,m}$ and $\frm\left(\bT^2 \setminus \bigcup_{j=1}^\ell \supp \rho_j\right) =0\,.$
\end{enumerate}
\end{lem}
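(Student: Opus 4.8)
The plan is to derive Lemma~\ref{lem:invariants} as a consequence of the quasi-compactness of the transfer operator $\cL$ of $F$ acting on the Banach space $\cB\subset H^s(\bT^2)$ constructed in Section~\ref{sec:TraOp}, combined with standard Perron--Frobenius/Ionescu-Tulcea--Marinescu (ITM) type arguments. First I would invoke the main spectral result of Section~\ref{sec:TraOp} (referenced here as Proposition~\ref{prop:spectral}): under assumptions \textbf{(A1)}--\textbf{(A6)}, $\cL$ is quasi-compact on $\cB$ with spectral radius $1$, and the part of the spectrum on the unit circle consists of finitely many eigenvalues of finite multiplicity. The key point, which uses assumption \eqref{main-hp} crucially via the Lasota--Yorke inequality established for $\cL$, is that the essential spectral radius is strictly less than $1$; this is where the control on the growth of non-transversal preimages $\widetilde{\cN}_F(n)^{1/n}<1$ enters.

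Next I would show that the peripheral spectrum is in fact $\{1\}$ and that there is no nontrivial Jordan block at $1$. Positivity of $\cL$ (it maps non-negative $L^1$ functions to non-negative $L^1$ functions, being a transfer operator with positive Jacobian by \textbf{(A1)}) together with the fact that $\cB$ embeds in $L^1$ and contains the constants (or at least that $\cL$ preserves a cone of non-negative densities) gives, by a now-routine argument, that any peripheral eigenvalue is a root of unity and, after passing to a finite iterate if necessary, that $1$ is the only peripheral eigenvalue and it is semisimple. Then the spectral projector $\Pi$ onto the generalized eigenspace at $1$ has finite rank $\ell$ and its image is spanned by non-negative functions $\rho_1,\dots,\rho_\ell\in\cB\subset H^s(\bT^2)$; a standard ergodic decomposition (extreme points of the simplex of invariant densities, using positivity and the finite-dimensionality of $\ker(\cL-\Id)$) produces the basis of ergodic acips $\mu_k=\rho_k\,\dd\frm$ with $\supp\rho_k$ disjoint up to $\frm$-null sets and $\frm(\bT^2\setminus\bigcup_j\supp\rho_j)=0$, which is item (iii). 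Item (ii) is then immediate. The dual statement $\mu_k(\rho_m)=\delta_{k,m}$ is the normalization of the dual eigenfunctionals, which on each ergodic component act by integration against Lebesgue restricted to that component.

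For item (i), exponential mixing of each $\mu_k$ follows from the spectral gap of $\cL$ restricted to the invariant subspace corresponding to the component $k$: writing $\cB=\bigoplus_k\cB_k\oplus\cB_0$ where $\cB_k=\bR\rho_k$ and $\cB_0=\ker\Pi$ on which the spectral radius of $\cL$ is $\eta<1$, one gets $|\int \phi\cdot(\psi\circ F^n)\,\dd\mu_k-\mu_k(\phi)\mu_k(\psi)|\le C\eta^n\|\phi\|_{\cB}\|\psi\|_{L^1(\mu_k)}$ for suitable test functions, which in particular gives that $\mu_k$ is ergodic (indeed mixing). That each $\mu_k$ is a physical measure with $\frm(\{x:\frac1n\sum\delta_{F^k x}\to\mu_k\})>0$ then follows from the classical argument: since $\rho_k\ge 0$ is the $L^1$-limit of $\frac1n\sum_{j=0}^{n-1}\cL^j g$ for any density $g$ supported where $\rho_k>0$ (which has positive Lebesgue measure by (iii)), Birkhoff's theorem applied to the ergodic measure $\mu_k$ plus absolute continuity of Lebesgue with respect to the basin shows the basin has positive measure.

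The main obstacle here is \emph{not} in this lemma per se — given the spectral picture it is essentially bookkeeping — but rather that all the substance is deferred to Proposition~\ref{prop:spectral}, i.e.\ establishing the Lasota--Yorke inequality and quasi-compactness of $\cL$ on a concrete anisotropic/Sobolev space adapted to the cone structure \eqref{eq:cone-def}, which requires the pinching condition \textbf{(A4)}, the cone invariance \eqref{invariance of cone}, and assumption \eqref{main-hp}. Within the present lemma, the only genuinely delicate step is ruling out nontrivial Jordan blocks and peripheral eigenvalues other than $1$ without a priori knowing ergodicity: one must use positivity of the operator together with a suitable lattice/cone argument in $\cB$, and verify that $\cB$ is rich enough (closed under the relevant lattice operations, or at least that the peripheral eigenfunctions have the regularity needed to run the argument) — this is precisely the kind of thing that the functional-analytic framework of \cite{HeHe}, which the introduction says the authors verify, is designed to handle.
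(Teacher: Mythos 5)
Your overall approach matches the paper's: Lemma~\ref{lem:invariants} is derived as a direct consequence of Proposition~\ref{prop:spectral}, which establishes quasi-compactness of $\cL_F$ on $\cB_s$ via the DFLY inequality of Theorem~\ref{thm:LY-F} plus H\'ennion's theorem, and then extracts the acips by a positivity argument. The paper's version of that positivity argument is the one in \cite[Appendix~B]{BaGo}: set $\mu_*:=\Pi 1\,\dd\frm$, introduce the cone $\bV=\{\varphi\in L^\infty:\varphi\ge 0,\ \varphi\circ F=\varphi\}$ and the isomorphism $\Phi:\cE_1\to\bV$, identify the extremal points of the normalized slice $\bV_1$ as indicators $\mathds{1}_{E_j}$ of minimal invariant sets, and set $\rho_j=\mathds{1}_{E_j}\Pi 1$, $\mu_j=\mathds{1}_{E_j}\mu_*/\mu_*(E_j)$. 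This is the concrete instantiation of the ``extreme points of the simplex of invariant densities'' step you sketch, so you are on the same route (the paper's introduction does mention \cite{HeHe}, but the actual proof here follows \cite{BaGo}).

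Two points deserve pushback. First, your hedge ``after passing to a finite iterate if necessary, $1$ is the only peripheral eigenvalue'' is a genuine gap: exponential mixing for an iterate $F^N$ does not give exponential mixing for $F$ itself. If $\cL_F$ had a nontrivial peripheral eigenvalue, each $F$-ergodic $\mu_k$ could split into a cycle of $F^N$-ergodic pieces, and $\mu_k$ would then be ergodic but not mixing for $F$. The fact that the peripheral spectrum of $\cL_F$ itself reduces to $\{1\}$ (and is semisimple — the paper gets semisimplicity for free from $\sup_n\|\cL_F^n\|<\infty$ via Ces\`aro averages) is a dynamical input, supplied by \cite{CaLi} under the transversality condition \eqref{main-hp}; it cannot be manufactured by a formal positivity-plus-iteration argument. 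Second, the statement $\frm(\bT^2\setminus\bigcup_j\supp\rho_j)=0$ in item (iii) is not an automatic byproduct of the extremal-point decomposition (which a priori lives on the support of $\mu_*$, not of $\frm$). The paper devotes a separate argument to it: one studies the weak limits $h_j^*$ of the Ces\`aro averages of $\mathds{1}_{E_j}\circ F^k$, shows $h_j^*=\mathds{1}_{D_j}$ with $\sum_j\mathds{1}_{D_j}=1$ $\frm$-a.e., and derives the explicit form \eqref{eq:projection} of $\Pi$. That step is what rules out an $\frm$-positive transient set invisible to $\mu_*$, and your sketch does not address it.
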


\subsection{Results}

 Let $F:\bT^2\to \bT^2$ be a map, $\tau: \bT^2 \to \bR$ be a centred observable\footnote{$\mu(\tau) = 0$ for each ergodic acip $\mu$.} and consider the ergodic sums 
 \begin{equation} \label{def:tau}\tau_n := \sum_{k=0}^{n-1} \tau \circ F^k.
 \end{equation}
 We will use the symbol $m$ to denote any initial probability measures absolutely continuous with respect to $\frm$, $f_m$ to be the associated density, i.e., $\dd m = f_m\, \dd \frm$ and by $\bP_m$ the associated probability distribution function.
 
 We say that $(F,\tau)$ satisfies a non-trivial CLT if there exist $\sigma>0$ and $A \in \bR$ such that
$$\lim_{n \to \infty }\bP_{m}\left(\frac{\tau_n}{\sqrt{n}}\leq z\right) = \int_{-\infty}^z \fn\left(\frac{y}{\sigma}\right)\, \dd y\,,\,\,\, z \in \bR$$
 and $\fn(y)=\frac{1}{\sqrt{2\pi}}e^{-y^2/2}$ is the standard normal density. 

More general CLT can be established when the initial measure is not an acip and $F$ is not ergodic but it has finitely many acips (see \cite[p.~119]{HeHe}). We recall this general form below.

Suppose $F$ has $\ell$ acips and then the general CLT, if it holds, takes the form 
\begin{equation}\label{CLT}
    \lim_{n \to \infty }\bP_{m}\left(\frac{\tau_n}{\sqrt{n}}\leq z\right) = \sum_{k=1}^\ell c_k \int_{-\infty}^z \fn\left(\frac{y}{\sigma_k}\right)\, \dd y\,,\quad \text{as}\\,,\,\,\, z \in \bR
\end{equation}
where $c_k$ and $\sigma_k$, $k=1,\dots, d$ are constants\footnote{We will find an explicit expression for the coefficients $c_k$ and $\sigma_k$ (see Theorem \ref{thmB}).} with $\sum c_k =1$ and $\sigma_k>0$. 

\begin{oss}
Intuitively, if the system is already at the equilibrium, the initial mass can be decomposed to individual ergodic basins and each component can be evolved independently. If each component evolves into a Gaussian, then the overall asymtptotic should be a convex combination of those Gaussians, weighted by how the system was initialised. For example, if the initial measure is supported in a single ergodic basin, then all $c_k$s except one will be $0$. 
\end{oss}

The Berry-Esseen estimate given below provides an elementary quantitative rate of convergence in the CLT. 
\begin{equation}\label{BEst}
    \sup_{z \in \mathbb R}\left|\bP_{m}\left(\frac{\tau_n}{\sqrt{n}}\leq z\right) - \sum_{k=1}^dc_k \int_{-\infty}^z \fn\left(\frac{y}{\sigma_k}\right)\, dy\,\right|  \leq \frac{C(\||m|\|+1)}{\sqrt{n}}
\end{equation}
where the norm $\||\cdot|\|$ is understood to be the norm in a suitable (dual) Banach space. In order to understand local statistics of $\tau_n$, we require LLTs. The following is such an LLT:
\begin{equation}\label{fLLT}
    \lim_{n\to\infty} \sup_{z\in \bR}\left|\sqrt{n} \bE_m\left(g(\tau_n - z)\right) - \frm(g)\sum_{k=1}^{\ell} c_k\fn\left(\frac{z}{\sigma_k\sqrt{n}}\right) \right| = 0
\end{equation}
where $g \in C^0(\bR)$ be bounded and $g(z) = o(z^{-2})$ as $z\to\infty$.\footnote{In particular, one can consider $g \in C^0_c(\bR)$\,.}

For these limit theorems to hold, our random variables must be non-degenerate. So, we make the following non-degeneracy assumptions on $\tau \in \cC^r(\bT^2,\bC)$.
\vspace{5pt}

\textbf{(B1)} There is no ergodic basin, $D$, of $F$ and that there are no $\Psi\in C^0(D, \bR)$, $c\in \mathbb R$ such that $\Psi\in L^1(\bT^2, \bR)$, $c\in \bR$ such that 
\[
\tau|_D=\Psi\circ F-\Psi+c.
\]

(This is a non-coboundary condition.)\vspace{5pt}

\textbf{(B2)} There is no ergodic basin, $D$, of $F$ and that there are no $\Psi\in C^0(D, \bR)$, $a>0, b\in \bR$ such that
\[
\tau|_D+\Psi-\Psi\circ F
\]

is $a\mathbb{Z}+b$ valued.

(This is a non-lattice condition.)

\begin{oss}
Note that when $F$ is ergodic these two conditions reduces to the usual non-coboundary and non-lattice conditions assumed in the literature. 
\end{oss}

We are ready to state our results, which proves are presented in Section \ref{sec:proof-thmA}. The first one is the CLT.
\begin{bigthm}\label{thmA}
Let $F$ be a SVPH satisfying \eqref{main-hp} and $\tau \in \cC^r(\bT^2, \bC)$ be such that \emph{\textbf{(B1)}} is true. Then the non-trivial CLT \eqref{CLT} and the Berry-Esseen estimate \eqref{BEst} hold.  
\end{bigthm}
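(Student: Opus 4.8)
The plan is to verify the abstract hypotheses of the Hennion–Hervé type framework (the non-ergodic CLT machinery of \cite{HeHe}) recalled in Appendix \ref{app:AbstractCLT}, applied to the transfer operator $\cL$ of $F$ and its twisted perturbation $\cL_t$ acting on the Sobolev-type space inside $H^s(\bT^2)$. The ingredients are: (1) quasi-compactness of $\cL$ with a finite-dimensional peripheral spectrum consisting of $\ell$ eigenvalues equal to $1$ (each simple after passing to the appropriate eigenprojections), with the corresponding eigenprojectors $\Pi_k$ reconstructing the acips $\mu_k$ and densities $\rho_k$ from Lemma \ref{lem:invariants}; (2) analytic (or at least $\cC^2$) dependence $t\mapsto \cL_t$ near $t=0$ in the operator norm on this space, which is where $\tau\in\cC^r$ enters — multiplication by $e^{it\tau}$ must be bounded on the space, and here $r\ge 4$ together with $s>1$ gives enough smoothness; (3) for each $k$, the leading eigenvalue $\lambda_k(t)$ of $\cL_t$ restricted near the $k$-th peripheral eigenspace has the expansion $\lambda_k(t) = 1 - \tfrac{1}{2}\sigma_k^2 t^2 + o(t^2)$, with $\sigma_k^2 = \lim_n \tfrac1n \mu_k(\tau_n^2)$ the Green–Kubo variance on the $k$-th ergodic component.

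First I would establish (1): this is essentially Proposition \ref{prop:spectral} / Section \ref{sec:TraOp}, so I would just cite it, noting that the hypothesis \eqref{main-hp} is exactly what forces the essential spectral radius below $1$ and the peripheral spectrum to be $\{1\}$ with algebraic multiplicity $\ell$; the decomposition of the unit eigenspace into the $\mu_k,\rho_k$ data is Lemma \ref{lem:invariants}(iii), and I would record the spectral projections $\Pi_k f = \mu_k(f)\rho_k$. Next I would set up the twisted operator $\cL_t f = \cL(e^{it\tau} f)$ and quote Section \ref{sec:twisted} for its quasi-compactness, uniformly for $t$ in a neighbourhood of $0$, together with continuity of $t\mapsto \cL_t$; standard perturbation theory (Kato) then gives, for small $t$, a splitting of the spectrum near each peripheral point and analytic families $t\mapsto \lambda_k(t)$, $t\mapsto \Pi_k(t)$. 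The Berry–Esseen rate \eqref{BEst} requires a little more: $\cC^3$-regularity of $t\mapsto \cL_t$ (hence of $\lambda_k$) near $0$ and a uniform spectral-gap bound for $\cL_t$ on an annulus $\{\delta \le |t| \le \pi\text{-scale cutoff}\}$ — but for \emph{Berry–Esseen} only the small-$t$ behaviour plus a crude bound away from $0$ is needed, and the constant $\||\,m\,|\|$ comes from $\|\Pi_k(t) f_m - \mu_k(f_m)\rho_k\|$ being controlled by the norm of $m$ in the dual space.

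The computation of the variance and the non-degeneracy $\sigma_k > 0$ is where hypothesis \textbf{(B1)} is used: the standard Nagaev–Guivarc'h argument gives $\lambda_k(t) = 1 + i a_k t - \tfrac12 b_k t^2 + o(t^2)$ with $a_k = \mu_k(\tau) = 0$ (observable centred) and $b_k = \sigma_k^2$; one shows $\sigma_k^2 = 0$ forces $\tau|_{D_k}$ to be an $L^2$ coboundary over the $k$-th ergodic component, and a standard regularity bootstrap (using $\tau \in \cC^r$ and quasi-compactness on a smooth space, cf.\ the transfer-operator coboundary regularity arguments) upgrades the transfer function to $C^0$, contradicting \textbf{(B1)}. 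Feeding these expansions into the abstract theorem yields \eqref{CLT} with $c_k = \mu_k(f_m) = m(\rho_k)$ — note $\sum_k c_k = m(\sum_k \rho_k) = m(\bT^2) = 1$ by Lemma \ref{lem:invariants}(iii) — and simultaneously \eqref{BEst}.

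I expect the main obstacle to be step (2) in the refined form needed for Berry–Esseen: establishing that $t \mapsto \cL_t$ is sufficiently regular (at least $\cC^2$, ideally $\cC^3$) as a family of bounded operators on the \emph{anisotropic} Sobolev-type space used for the SVPH transfer operator, since multiplication by $e^{it\tau}$ interacts delicately with the norm (which treats the expanding and central directions differently), and since one must also control $\cL_t$ on the whole range of non-zero $t$ uniformly enough to get the $1/\sqrt n$ rate. Also somewhat delicate is the coboundary-regularity argument proving $\sigma_k>0$ in the non-ergodic setting, where the ``ergodic basin'' $D_k$ has to be handled carefully because $F$ is non-invertible; but this should follow the same lines as the ergodic case once one restricts to the support of $\rho_k$ and uses that $F$ preserves the partition into basins.
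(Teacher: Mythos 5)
Your route is exactly the paper's: verify hypotheses (1)--(5-$r$) of the Hennion--Herv\'e framework in Appendix~\ref{app:AbstractCLT} and invoke the abstract CLT/Berry--Esseen theorem, with Proposition~\ref{prop:spectral} supplying (1)--(3) and regularity of $\nu\mapsto\cL_\nu$ supplying (4) and (5-$r$). Where you miscalibrate is in flagging that regularity as ``the main obstacle'': because $\cB_s$ is a Banach algebra containing $\cC^{r-1}$, the paper obtains \emph{analyticity} essentially for free from the power series $\cL_F(e^{i\nu\tau}\cdot)=\sum_{k\ge 0}\frac{(i\nu)^k}{k!}\cL_F(\tau^k\cdot)$ together with the Weierstrass $M$-test (the $k$-th term has operator norm $\lesssim \|\tau\|^k_{\cC^{r-1}}\|\cL_F\|$), so no delicate accounting of how $e^{i\nu\tau}$ interacts with the expanding/central anisotropy is needed, and one gets smoothness of all orders at once. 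Your Nagaev--Guivarc'h non-degeneracy sketch and the $L^2\!\to\!C^0$ coboundary regularity bootstrap on each ergodic basin are sound and are indeed how (B1) plugs into the hypothesis of the abstract theorem, even though the paper leaves that bootstrap implicit. One small slip: the weight is $c_k=\frm(\rho_k)\,\mu_k(f_m)=m(\rho_k)$, not $\mu_k(f_m)$ alone (these differ by the factor $\frm(\rho_k)$, which need not be $1$ when $\ell>1$).
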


\begin{oss}
From the proof it follows that, while the constants $c_k$s depend on the initial measure $m$, the constants $\sigma_k$ (aka diffusion coefficients) do not, i.e., they depend only on $F$ and $\tau$ and are independent of how the system is initialised.
\end{oss}

\begin{oss}
If $m$ is an invariant acip (not necessarily ergodic), then one can establish more general limit theorems where both $\tau_n$ and $\tau \circ F^n$ are tracked, e.g., the limiting behaviour of $\bE_m(f(\tau \circ F^n)g(\tau_n/\sqrt{n}))$ and $\bE_m(f(\tau \circ F^n)g(\tau_n-z))$ where $f$ and $g$ are appropriately chosen. We refer the reader to \cite{HeHe} for the exact statements. 
\end{oss}

To prove Theorem \ref{thmA} we will use the functional analytic framework presented in Appendix \ref{app:AbstractCLT} applied to the transfer operator $\cL_F$ associated to $F$ and the \textit{twisted transfer operators} $\cL_\nu,\,\,\, \nu \in \bR$. This will be the context of section \ref{sec:TraOp}. In section \ref{sec:twisted}, we do a further analysis of the spectral properties of $\cL_\nu$ to obtain the following LLTs, which proof can be found at the end of section \ref{sec:proof-thmA}. 

\begin{bigthm}\label{thmB}
Let $F$ be a SVPH satisfying \eqref{main-hp} and $\tau \in \cC^r(\bT^2, \bC)$ be such that \emph{\textbf{(B2)}} holds. Then the LLT \eqref{fLLT} holds for any initial measures $\dd m=f_m \dd\frm$. Also, there exists $C>0$ such that for all $a, b \in \bR$ with $0<b-a<1$ and for each $\delta>2$,
\begin{equation*}\label{LLT}
    \left|\bP_{m}\left(\frac{\tau_n}{\sqrt{n}}\in [a,b]\right) - \sum_{k=1}^dc_k \int_{a}^b \fn\left(\frac{y}{\sigma_k}\right)\, dy\,\right| \leq C\left(\frac{\max_k  \fn\big(\frac{a}{\sigma_k}\big)}{n^{\frac 12-\frac 1\delta}}+\frac{b-a}{\sqrt{n}}\right)\,,
\end{equation*}
where, recalling $\mu_k$ and $\rho_k$ from Lemma \ref{lem:invariants}, $c_k = \frm(\rho_k)\mu_k(f_m)$ and
$$\sigma^2_k = \mu_k(\tau_k^2)+2\sum_{j=1}^\infty \mu_k(\tau_k\cdot\tau_k\circ F^j)\,.$$
\end{bigthm}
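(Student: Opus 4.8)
The plan is to derive both Theorem \ref{thmA} and Theorem \ref{thmB} from the abstract functional-analytic framework in Appendix \ref{app:AbstractCLT} (the non-ergodic Hennion–Hervé machinery), applied to the family of twisted transfer operators $\cL_\nu$ acting on the Sobolev-type Banach space built in Sections \ref{sec:TraOp}–\ref{sec:twisted}. The skeleton has four parts. First, I would record the spectral picture of the untwisted operator $\cL_F = \cL_0$: by Proposition \ref{prop:spectral} it is quasi-compact on $H^s$-type space with a spectral gap, its peripheral spectrum consists of finitely many simple eigenvalues equal to $1$ (the number of which is the number $\ell$ of ergodic acips), and the corresponding spectral projectors are $\Pi_k f = \mu_k(f)\rho_k$ with $\mu_k,\rho_k$ as in Lemma \ref{lem:invariants}. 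This decomposes the dynamics into $\ell$ ``ergodic blocks''. Second, I would establish the analytic perturbation theory in $\nu$: the map $\nu \mapsto \cL_\nu$, where $\cL_\nu f = \cL_F(e^{i\nu\tau}f)$, is analytic (indeed $\tau \in \cC^r$ with $r$ large, so multiplication by $e^{i\nu\tau}$ is bounded on the relevant space and depends analytically on $\nu$), so for $\nu$ near $0$ there are $\ell$ eigenvalues $\lambda_k(\nu)$ branching from $1$, with eigenprojectors $\Pi_k(\nu)$ depending analytically on $\nu$, and the rest of the spectrum stays inside a disk of radius $<1$ uniformly for small $\nu$.

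Third comes the computation of the leading eigenvalues. Using the standard second-order perturbation expansion for a simple eigenvalue, one gets
\[
\lambda_k(\nu) = 1 + i\nu\, \mu_k(\tau) - \frac{\nu^2}{2}\,\sigma_k^2 + o(\nu^2),
\]
and since $\tau$ is centred, $\mu_k(\tau)=0$; the second-order coefficient works out (via the usual telescoping/Green–Kubo manipulation, using $\Pi_k \cL_F = \cL_F \Pi_k = \Pi_k$ and summability of correlations coming from the spectral gap) to the Green–Kubo variance
\[
\sigma_k^2 = \mu_k(\tau_k^2) + 2\sum_{j=1}^\infty \mu_k\big(\tau_k\cdot \tau_k\circ F^j\big),
\]
exactly the formula in the statement — here $\tau_k$ should be read as $\tau$ restricted to/weighted by the $k$-th ergodic component. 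The non-degeneracy $\sigma_k^2>0$ for every $k$ is where assumption \textbf{(B1)} enters: $\sigma_k^2=0$ would force $\tau$ restricted to the basin $D_k$ to be an $L^2$-coboundary, and one upgrades $L^2$-coboundary to a continuous coboundary (regularity bootstrapping, as in the Livšic-type arguments), contradicting \textbf{(B1)}. For the LLT one additionally needs the ``aperiodicity'' input: for every $\nu \neq 0$ the spectral radius of $\cL_\nu$ is strictly less than $1$; the peripheral case $|\lambda_k(\nu)|=1$ for some $\nu\neq 0$ again reduces to $\tau|_{D_k}$ being cohomologous to a $(2\pi/\nu)\bZ$-valued (plus constant) function, excluded by \textbf{(B2)}.

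Fourth, I feed this into the abstract theorems. Writing the characteristic function
\[
\bE_m\!\big(e^{i\nu \tau_n/\sqrt n}\big) = \int \cL_{\nu/\sqrt n}^n f_m \, \dd\frm
= \sum_{k=1}^\ell \lambda_k(\nu/\sqrt n)^n\, \frm\!\big(\Pi_k(\nu/\sqrt n) f_m\big) + (\text{error} \to 0),
\]
and using $\lambda_k(\nu/\sqrt n)^n \to e^{-\sigma_k^2\nu^2/2}$ together with $\Pi_k(\nu/\sqrt n)f_m \to \Pi_k f_m$ so that $\frm(\Pi_k f_m) = \frm(\rho_k)\mu_k(f_m)=:c_k$ (note $\sum_k c_k = \frm(f_m)=1$), Lévy continuity yields \eqref{CLT}. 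Quantifying the convergence (Taylor remainder $|\lambda_k(\nu/\sqrt n)^n - e^{-\sigma_k^2\nu^2/2}| \lesssim n^{-1/2}$ on the bounded range of $\nu$, analyticity of $\nu\mapsto\Pi_k(\nu)f_m$, and the uniform spectral gap controlling the tail) and applying the Berry–Esseen smoothing inequality gives \eqref{BEst}; the factor $\||m|\|+1$ tracks the norm of $f_m$ in the dual space. For Theorem \ref{thmB}, I invoke the quantitative LLT from the appendix: on compact $\nu$-ranges one uses the same expansion and gap, while the aperiodicity from \textbf{(B2)} gives a uniform bound $\|\cL_\nu^n\| \le C\theta^n$ on $\delta \le |\nu| \le T$ and a Diophantine-free but still effective control on $|\nu|\le \delta$ via the $o(z^{-2})$ decay of $g$; assembling the pieces with the stated truncation parameter $\delta>2$ produces the displayed rate $n^{-(1/2-1/\delta)}$ plus the trivial $ (b-a)/\sqrt n$ term from the length of the interval.

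I expect the main obstacle to be not any single one of these steps in isolation but the verification — already carried out in Sections \ref{sec:TraOp} and \ref{sec:twisted} — that the twisted operators genuinely fit the abstract hypotheses: quasi-compactness with a Lasota–Yorke inequality that is \emph{uniform in the twist parameter} $\nu$, continuity/analyticity of $\nu \mapsto \cL_\nu$ in the operator norm between the weak and strong spaces, and the identification of the peripheral spectrum for all $\nu$. Within the present proof, the genuinely delicate point is the non-degeneracy dichotomy: translating the analytic statements ``$\sigma_k^2=0$'' and ``$|\lambda_k(\nu)|=1$'' into the coboundary/lattice structure of $\tau$ on a single ergodic basin, and then ruling these out via \textbf{(B1)}/\textbf{(B2)} — this requires care because the basins need not be open, the density $\rho_k$ may vanish on part of $\bT^2$, and one must bootstrap measurable or $L^2$ solutions of the cohomological equation to the continuous class in which the hypotheses are phrased.
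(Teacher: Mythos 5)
Your proposal matches the paper's high-level strategy: verify the Hennion--Herv\'e non-ergodic framework (Appendix \ref{app:AbstractCLT}) using the spectral picture from Proposition \ref{prop:spectral}, analytic perturbation of the twisted operators $\cL_\nu$, Green--Kubo identification of $\sigma_k^2$, and the two non-degeneracy inputs \textbf{(B1)} (for $\sigma_k>0$) and \textbf{(B2)} (for excluding peripheral eigenvalues of $\cL_\nu$, $\nu\neq 0$, which is Lemma \ref{lem:NoUnitEigen}). The first part of the statement, the convergence \eqref{fLLT}, is indeed read off Theorem \ref{thm:Hehe3}, as you say.

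There is however a genuine gap in your treatment of the second, quantitative display. You say you would ``invoke the quantitative LLT from the appendix'' and that the ``truncation parameter $\delta>2$'' assembles the rate. Neither is what happens. The appendix contains no quantitative LLT; only the qualitative statement \eqref{fLLT} is abstract. The displayed rate is derived directly, by the smoothing device of \cite[Section 1.5]{DeKiLi}: one introduces an auxiliary independent bounded random variable $Z$ with smooth density $\psi$, sets $\tilde\tau_n=\tau_n/\sqrt n + \ve Z$, and writes the density $\cN_{n,\ve}$ of $\tilde\tau_n$ by Fourier inversion against $\widehat\psi(\ve\nu)$. The parameter $\delta$ is not a truncation level: it is the polynomial decay exponent of $\widehat\psi$, i.e. $|\widehat\psi(\nu)|\lesssim|\nu|^{-\delta}$, and it governs the error $O(n^{-1/2}+\ve^{-\delta}n^{-(\delta-1)/2})$ in the density asymptotics. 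The frequency domain is cut into three regions: $|\nu|\le\ve\sqrt n$ where the eigenvalue expansion from Theorem \ref{thm:Hehe1} applies, $|\nu|\in[\ve\sqrt n,L\sqrt n]$ where Lemma \ref{lem:NoUnitEigen} gives exponential smallness, and $|\nu|\ge L\sqrt n$ where the decay of $\widehat\psi$ kills the integral. Finally $\bP_m(\tau_n/\sqrt n\in[a,b])$ is sandwiched between $\bP_m(\tilde\tau_n\in[a\mp\ve,b\pm\ve])$, and the optimal balance $\ve=n^{1/\delta-1/2}$ gives exactly $n^{-(1/2-1/\delta)}\max_k\fn(a/\sigma_k)+(b-a)/\sqrt n$. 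Your ``assembling the pieces'' elides all of this; without the mollification step there is no mechanism producing the $\max_k\fn(a/\sigma_k)$ factor or the $n^{-1/\delta}$ loss, and the claim that the appendix supplies the estimate is wrong. A secondary, cosmetic issue: you reuse the symbol $\delta$ for an unrelated small frequency threshold in the same paragraph, which obscures the role $\delta$ actually plays.
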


\begin{oss}
To prove the second part of the above theorem, we follow the arguments given in \cite[Section 1.5]{DeKiLi} for expanding maps adapted to our non-ergodic partially hyperbolic setting. It is easy to note that the usefulness of the result depends on the size of the interval: if $b-a=O(n^{-\gamma})$ there is an interplay between $\delta$ and $\gamma$ in order to get a useful estimate, while if $a$ or $b$ are $O(\sqrt{\log n})$ we gain no information from the previous estimate. To get results more close to optimal, we would need an Edgeworth expansion (following the ideas in \cite{FeLi, FePe}) which will be established in a subsequent project.
\end{oss}


\section{Unperturbed transfer operator}\label{sec:TraOp}
\subsection{Quasi-compactness}
In this section we explore the spectral properties of the (SRB)-transfer operator associated to $F$ and defined via
\begin{equation}\label{L_F}
    \cL_F u(x)=\sum_{y\in F^{-1}(x)} \frac{u(y)}{|\det DF(y)|}, \quad u\in L^1(\bT^2),\; x,y \in \bT^2,
\end{equation}
which is the main tool to study the statistical properties of $F$.

The main technical result of \cite{CaLi} is a Doeblin-Fortet-Lasota-Yorke (DFLY) inequality in suitable functional spaces, which implies quasi-compactness for the operator $\cL_F$, and it will be crucial for our arguments. To state this result, we recall the definition of the Banach spaces used. 

For a positive integer $s>1$ to be fixed later, $H^s(\bT^2, \bC)$ will denote the standard Hilbert space of $L^2(\bT^2, \bC)$ functions whose $s$-th weak derivative belongs to $L^2$, endowed with the norm\footnote{This is not the classical norm used in such a space, but it is equivalent to it and we will take this formulation for simplicity.}
\begin{equation}
\label{equivalence of norm H L}
\|u\|^2_{H^s}:= \sum_{\gamma+\beta=s} \| \partial_{x_1}^\gamma \partial_{x_2}^{\beta} u \|^2_{L^{2}}.
\end{equation}
We will denote by $\cB_s$ the completion of $\cC^{r-1}(\bT^2, \bR)$ with respect to the norm
\[
\|u\|_s=\|u\|_{H^s}+\|u\|^*_{s+2}
\]
where
\begin{equation}
\label{geom norm}
\| u \|^*_{\rho}:=\max_{|\alpha|\le \rho}\sup_{\gamma \in \Gamma} \sup_{\substack{\phi \in \mathcal{C}^{|\alpha|}(\mathbb{T}, \bR)\\\|\phi\|_{\mathcal{C}^{|\alpha|}}=1}} \int_{\mathbb{T}} \phi(t)(\partial ^{\alpha} u)(\gamma(t))dt, \quad \rho\in \bN
\end{equation}
where $\Gamma$ is a set of admissible curves in the central cone (recall assumption {\textbf{(A3)}). Note that the completion of $\cC^r$ with respect to the norm $\|\cdot\|^*_\rho$ defines a decreasing sequence of Banach spaces continuously embedded in $L^1$, namely
\begin{equation}
\label{relation norm rho rho' and L1}
\|u\|_{L^1}\le {C}\|u\|^*_{\rho_1}\le {C}\|u\|^*_{\rho_2}, \qquad \mbox{for every} \quad 0\le \rho_1\le \rho_2\le r-1.
\end{equation}
$(\cB_{s}, \|\cdot \|_s)$ is, in fact, a Banach algebra satisifying the following.
\begin{itemize}[leftmargin=20pt]
\item $\cC^{r-1}(\bT^2)\subset \cB_{s}\subset H^{s}(\bT^{2})\subset \cC^0(\bT^2)$ where the first inclusion is dense and the second follows because $s>1$.
\item $\cB_s$ is compactly embedded in $L^1(\bT^2)$. 
\end{itemize}
For further details, we refer the reader to \cite{CaLi}.

We also recall the following DFLY inequality.\footnote{The statement of the result  has been formulated here in a simpler way to accommodate what is essentially needed in the present work.}
\begin{thm}[Quasi-compactness of \texorpdfstring{$\cL_F$}{Lg} \cite{CaLi}]\label{thm:LY-F} Let $r \ge 4$ and $F\in \cC^r(\bT^2,\bT^2)$ be an SVPH satisfying \eqref{main-hp}. Then there exist $1< s\le r-3$, $\sigma_* \in (0,\lambda^{-1}\lambda_c^{\xi_s})$, $C_\sigma$ and $D_\sigma>0$ such that the (SRB)-transfer operator $\cL_F:\cB_{s}\to\cB_{s}$ satisfies\footnote{We will use $L^1$ as a short for $L^1(\bT^2)$.}
\begin{equation}\label{LY-F}
\begin{split}
&\|\cL_F^n u\|_{L^1}\le \|u\|_{L^1}\\
&\|\cL_F^n u\|_{s}\le C_{\sigma_*} \sigma_*^n \|u\|_s+D_\sigma \|u\|_{L^1}
\end{split}
\end{equation}
for each $n \in \bN$ and $u\in \cC^{r-1}(\bT^2,\bC)$.
\end{thm}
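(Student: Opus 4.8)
Since this is essentially the main estimate of \cite{CaLi}, I only outline the strategy I would follow. The first inequality in \eqref{LY-F} is the easy one: $\cL_F$ is dual to composition by $F$ with respect to $\frm$, hence preserves the integral, and positivity gives $|\cL_F u|\le \cL_F|u|$; so $\|\cL_F u\|_{L^1}\le\int\cL_F|u|\,\dd\frm=\|u\|_{L^1}$, and one iterates.

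For the second inequality the plan is to prove a Doeblin--Fortet--Lasota--Yorke bound separately for each of the two pieces of $\|\cdot\|_s=\|\cdot\|_{H^s}+\|\cdot\|^*_{s+2}$. First I would fix a finite atlas and, writing $\cL_F^n u=\sum_j \psi_j\,(u\circ h_j)/|\det DF^n\circ h_j|$ over the (finitely many) inverse branches $h_j$ of $F^n$ with a subordinate partition of unity, differentiate up to order $s$. The chain/Fa\`a di Bruno rule produces a sum of terms in which some derivatives land on $u$ and the rest on the weight, on the branch, and on the cutoffs; the number of such terms and the norms of the non-$u$ factors are controlled uniformly in $n,j$ using \eqref{partial hyperbolicity 2} and the distortion bound coming from \textbf{(A5)}, with the combinatorial multiplicity governed by $\zeta_r=6(r+1)!$ from \eqref{def of zetar}. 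A derivative of $u\circ h_j$ taken in an \emph{unstable} direction produces, by \textbf{(A2)}, a factor comparable to $\lambda^{-n}$ and hence contracts, whereas a derivative in the \emph{central} direction produces only a factor comparable to $\lambda_c^n$, which does not. My key step would be to remove these central derivatives by integrating by parts \emph{along the central foliation}: assumption \textbf{(A3)} says preimages of admissible central curves are admissible central curves, so one can move a central derivative off $u$ onto the smooth weights, at the cost of terms bounded by $\|u\|^*_{s+2}$ --- which is exactly why the strong part of the norm carries two extra derivatives --- plus lower-order terms bounded by $\|u\|_{L^1}$. Summing, the surviving exponential factor is of the form $\sigma_*^n$ with $\sigma_*<\lambda^{-1}\lambda_c^{\xi_s}<1$, the last inequality being the pinching condition \textbf{(A4)}; everything else is absorbed into $D_\sigma\|u\|_{L^1}$, with the preimage sums $\sum_j|\det DF^n\circ h_j|^{-1}$ kept finite and their non-transversal part made subexponential via \eqref{main-hp}.

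For $\|\cL_F^n u\|^*_{s+2}$ I would argue similarly but more directly: given an admissible curve $\gamma\in\Gamma$, a unit test function $\phi\in\cC^{|\alpha|}(\bT,\bR)$ and $|\alpha|\le s+2$, I would pull $\gamma$ back along $F^n$ to the finitely many admissible central preimage curves furnished by \textbf{(A3)}, change variables in $\int_{\bT}\phi\,(\partial^\alpha\cL_F^n u)(\gamma)\,\dd t$, and re-express it as a sum of integrals $\int\widetilde\phi_j\,(\partial^{\alpha'}u)(\gamma_j)\,\dd t$ with $|\alpha'|\le s+2$ and $\|\widetilde\phi_j\|_{\cC^{|\alpha'|}}$ controlled; the chain-rule factors along the central direction are powers of $\lambda_c$ by \textbf{(A2)} and \textbf{(A5)}, and after combining with the Jacobian weights and \eqref{main-hp} one again obtains a leading contracting factor $\sigma_*^n$ plus a $\|u\|_{L^1}$ remainder. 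Once \eqref{LY-F} holds, quasi-compactness is automatic from Hennion's theorem: $\cB_s$ embeds compactly in $L^1(\bT^2)$, so the essential spectral radius of $\cL_F$ on $\cB_s$ is at most $\sigma_*<1$, while its spectral radius equals $1$, because $\cL_F$ is a positive, integral-preserving operator and the Lasota--Yorke bound is uniform in $n$.

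The hard part is the central integration-by-parts step: one must organise the combinatorially many terms produced by high-order differentiation so that every central derivative is either eliminated by integration by parts along the invariant central curves or paid for within the two-derivative budget of $\|\cdot\|^*_{s+2}$, all while keeping the surviving exponential rate below the threshold allowed by \textbf{(A4)}. The accompanying bookkeeping --- uniform control of distortion, of derivatives of the cutoffs, and of the (possibly non-transversal) preimage sums, uniformly in $n$ --- is what makes the estimate technically demanding; assumption \eqref{main-hp} is precisely what prevents those preimage sums from growing and, together with the pinching, is what ultimately yields a bound with $\sigma_*<1$.
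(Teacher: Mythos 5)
The paper itself does not prove this theorem: it is imported wholesale from \cite{CaLi}, as the theorem's bracket and the accompanying footnote make explicit. There is therefore no proof in the paper to compare your sketch against; what follows is a comment on how your sketch relates to what is known of the \cite{CaLi} argument, as reflected in the building blocks collected in Appendix D.

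At the level of strategy your outline is sound and consistent with \cite{CaLi}: the $L^1$ bound is by positivity and integral preservation; the strong bound is obtained separately for the Sobolev part $\|\cdot\|_{H^s}$ and the geometric part $\|\cdot\|^*_{s+2}$; derivatives commuted through $\cL_F^n$ (via the operators $P^\alpha_{n,k}$ of \eqref{eq: LnP=PLn}) leave a contracting factor in the unstable direction and a growing $\lambda_c^n$ factor in the central one; the geometric norm, defined by integrating against low-regularity test functions along central curves and using the curve invariance of \textbf{(A3)}, is what lets one trade central derivatives for derivatives of the test function; and Hennion then yields quasi-compactness once the DFLY inequality is in hand. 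The paper confirms all of these ingredients via \eqref{L-Y B2}, \eqref{LY Hs and Hs-1}, \eqref{LY-HB0}.

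However, a few of your statements are off in ways that would matter in a real write-up. First, the claim that ``the preimage sums $\sum_j |\det DF^n\circ h_j|^{-1}$'' are ``kept finite'' is wrong: that sum is $\cL_F^n 1$, and by \eqref{C1norm} it grows like $\lambda_c^n$; it is not bounded. Second, you assign to \textbf{(A6)} the role of keeping a lower-order term absorbed in $D_\sigma\|u\|_{L^1}$. That is not where it enters. The transversality hypothesis \textbf{(A6)} controls the \emph{cross terms} between pairs of non-transversal preimages that appear when one squares the preimage sum to estimate $\|\cL_F^n u\|_{L^2}$ (and hence $\|\cL_F^n u\|_{H^s}$). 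Without it, the bound in the $H^s$ direction would carry a growing factor rather than the contracting $\hat\sigma^n$ of \eqref{LY-HB0}; that is precisely why \textbf{(A6)} appears in the hypotheses of the theorem even though it does not appear in the weak ($L^1$) bound. Third, $\zeta_r=6(r+1)!$ does not govern the combinatorial multiplicity of the chain rule; it is the exponent in the pinching condition \textbf{(A4)}. The relevant exponent produced by iterated differentiation is $\xi_s=6(s+1)!$ with $s\le r-3$, and the pinching $(\lambda_c^+)^{\zeta_r}<\lambda_-$ is imposed so that $\lambda^{-1}\lambda_c^{\xi_s}<1$ holds with margin for every admissible $s$. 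These are not fatal to the high-level idea, but if you were writing out a proof along these lines they would lead to incorrect intermediate bounds.
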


From now on we fix $s>1$ as in Theorem \ref{thm:LY-F}.
The main consequences of the above theorem are summarized in the following proposition.

\begin{prop}\label{prop:spectral}
The following is true.
    \begin{enumerate}[leftmargin=*]
    \item[(i)]  $\cL_F$ is quasi-compact  when acting in $\cB_s$, with spectral radius $r(\cL_F)=1$ and essential spectral radius smaller than $1$,\footnote{That is $\cB_s = \cF \oplus \cH$ with $\cF,\cH$ subspaces $\cL_F$-invariant, $r(\cL_{F}|_{\cH}) < 1$, $\dim \cF < \infty$, and $r(\cL_{F}|_{\cH})< |\zeta| \le 1$.} and $\sup_n \|\cL_F^n\|_{L^1} < \infty\,.$
    \item[(ii)] There exist $F-$invariant probability measures $\mu_1, \dots,\mu_\ell$ in $\cB_s^\prime$ such that $\mu_k(\rho_m) = \delta_{k,m}$, each one being an exponentially mixing physical measure, 
    where $\rho_1,\dots,\rho_\ell$ are non-negative eigen-functions in $\cB_s$  that forms a basis for Ker$(\cL_F - \Id) \cap \cB_s\,$ and $\frm\left(\bT^2 \setminus \bigcup_{j=1}^\ell \supp \rho_j\right) =0\,.$
\end{enumerate}
\end{prop}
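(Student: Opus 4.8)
The plan is to derive everything from the Doeblin--Fortet--Lasota--Yorke inequality of Theorem \ref{thm:LY-F} together with the compactness of the embedding $\cB_s\hookrightarrow L^1(\bT^2)$, by running the standard spectral machinery for quasi-compact positive transfer operators (as in \cite{HeHe}; see also \cite{BaGo,CaLi}). For (i): the pair of estimates \eqref{LY-F} is exactly a Lasota--Yorke inequality for $\cL_F$ with respect to the norm pair $(\|\cdot\|_s,\|\cdot\|_{L^1})$, and $\cB_s$ embeds compactly into $L^1(\bT^2)$, so Hennion's theorem applies and gives that $\cL_F$ is quasi-compact on $\cB_s$ with essential spectral radius at most $\sigma_*<1$ (in particular the peripheral spectrum consists of finitely many eigenvalues of finite multiplicity and one has a decomposition $\cB_s=\cF\oplus\cH$ as in the footnote). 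The first line of \eqref{LY-F} yields $\|\cL_F^n\|_{L^1}\le 1$ for all $n$, hence $\sup_n\|\cL_F^n\|_{L^1}<\infty$; the second line, combined with the continuous embedding $\cB_s\hookrightarrow L^1$, gives $\sup_n\|\cL_F^n\|_s<\infty$, so $r(\cL_F)\le 1$. On the other hand $\int\cL_F u\,\dd\frm=\int u\,\dd\frm$ for all $u$, i.e.\ Lebesgue measure $\frm$ is a nonzero fixed point of $\cL_F^*$ on $\cB_s'$ (legitimate because $\cB_s\hookrightarrow C^0\subset L^1$), which forces $1\in\sigma(\cL_F)$ and $r(\cL_F)\ge 1$. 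Hence $r(\cL_F)=1$, proving (i).

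For (ii), the uniform bound $\sup_n\|\cL_F^n\|_s<\infty$ rules out Jordan blocks at the peripheral eigenvalues, so $\cL_F$ restricted to the finite-dimensional peripheral spectral subspace is power-bounded, invertible, and diagonalisable with eigenvalues on the unit circle; moreover $1$ is among them. Positivity of $\cL_F$ (it maps nonnegative functions to nonnegative functions) then allows the usual Perron--Frobenius / ergodic-decomposition argument: the eigenspace $\cF_1:=\operatorname{Ker}(\cL_F-\Id)\cap\cB_s$ is finite-dimensional and admits a basis of nonnegative functions $\rho_1,\dots,\rho_\ell$, normalised by $\frm(\rho_k)=1$ and with pairwise $\frm$-disjoint supports $S_k:=\supp\rho_k$; the measures $\dd\mu_k:=\rho_k\,\dd\frm$ are then $F$-invariant probabilities, and the coordinate functionals dual to this basis extend, via the eigenprojection $\Pi_1=\sum_k\rho_k\otimes\mu_k$, to elements $\mu_k\in\cB_s'$ satisfying $\mu_k(\rho_m)=\int\rho_k\rho_m\,\dd\frm=\delta_{k,m}$. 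One checks in the standard way that the $\mu_k$ are exactly the ergodic acips of $F$ (each is ergodic because $\rho_k$ is an extreme point of the convex cone of invariant densities, and conversely every ergodic acip has density in $\cF_1$). Finally, the complement of $\bigcup_k S_k$ is (mod $\frm$) forward invariant and cannot carry an invariant density independent of $\rho_1,\dots,\rho_\ell$, so $\frm\big(\bT^2\setminus\bigcup_k\supp\rho_k\big)=0$.

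It remains to see that each $\mu_k$ is an exponentially mixing physical measure. Physicality: $\rho_k\in\cB_s\subset C^0(\bT^2)$, so on the open set $\{\rho_k>0\}$ (which has positive $\frm$-measure) the measure $\mu_k$ is equivalent to $\frm$; since $\mu_k$ is ergodic, Birkhoff's theorem gives that $\mu_k$-a.e.\ — hence $\frm$-a.e.\ in $\{\rho_k>0\}$ — point $x$ satisfies $\frac1n\sum_{j<n}\delta_{F^jx}\to\mu_k$, so the basin of $\mu_k$ has positive $\frm$-measure. Exponential mixing: restricting $\cL_F$ to the closed $\cL_F$-invariant subspace of elements of $\cB_s$ supported on $S_k$, the eigenvalue $1$ is simple with eigenspace $\bC\rho_k$, the essential spectral radius is still $<1$, and — using that on an SVPH each ergodic component has trivial cyclic structure (so $1$ is the only peripheral eigenvalue of the restricted operator, cf.\ \cite[\S4]{CaLi}) — one obtains a genuine spectral gap, whence $\big|\mu_k\big(\phi\cdot(\psi\circ F^n)\big)-\mu_k(\phi)\,\mu_k(\psi)\big|\le C\theta^n\|\phi\|_{\cB_s}\|\psi\|_{L^1(\mu_k)}$ for some $\theta\in(0,1)$; this is the claimed exponential mixing. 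All these final verifications are carried out in detail in \cite[\S4]{CaLi}, and here amount to reading them off the spectral picture.

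I expect the main obstacle to be the middle step: extracting the correct Perron--Frobenius structure from mere positivity plus quasi-compactness — producing the basis of $\cF_1$ by nonnegative functions with pairwise disjoint supports, constructing the dual functionals $\mu_k$ with the exact biorthogonality $\mu_k(\rho_m)=\delta_{k,m}$, identifying them with the ergodic acips, and, for the mixing statement, correctly ruling out a cyclic (finite-order) part of the peripheral spectrum on each ergodic component. These points are classical but somewhat delicate, and they are precisely where the argument leans on \cite{HeHe} and \cite{CaLi} rather than on anything specific to the present setting.
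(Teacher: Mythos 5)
Your overall strategy for (i) is the same as the paper's and is correct: Hennion's theorem plus the DFLY inequality \eqref{LY-F} and the compact embedding $\cB_s\hookrightarrow L^1$ give quasi-compactness and essential spectral radius $\le\sigma_*$, and the two observations $\sup_n\|\cL_F^n\|_{L^1}\le 1$, $\frm\in\cB_s'$ fixed by $\cL_F^*$, pin the spectral radius at $1$. This part is fine.

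For (ii) there is a concrete error, and it is precisely at the ``delicate middle step'' you flag. You set $\dd\mu_k=\rho_k\,\dd\frm$ with the normalisation $\frm(\rho_k)=1$, and then assert $\mu_k(\rho_m)=\int\rho_k\rho_m\,\dd\frm=\delta_{k,m}$. The off-diagonal vanishing is fine (disjoint supports), but on the diagonal this reads $\int\rho_k^2\,\dd\frm=1=\int\rho_k\,\dd\frm$, which has no reason to hold: Cauchy--Schwarz gives $\big(\int\rho_k\,\dd\frm\big)^2\le\frm(\supp\rho_k)\int\rho_k^2\,\dd\frm$, with equality only if $\rho_k$ is constant on its support, which is false in general. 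The underlying conceptual slip is that you are conflating two distinct objects: the ergodic invariant probability measure associated to $\rho_k$ (whose density \emph{is} a multiple of $\rho_k$), and the dual functional $\tilde\mu_k$ appearing in the eigenprojection $\Pi=\sum_k\rho_k\otimes\tilde\mu_k$. These are genuinely different; $\Pi$ is a spectral projector for $\cL_F$, not a self-adjoint $L^2$-projector, so there is no reason $\tilde\mu_k$ should be given by integration against $\rho_k$ itself. If you normalise to make the $\mu_k$ probability measures you lose biorthogonality, and if you normalise for biorthogonality you lose the probability normalisation. The paper resolves this by constructing the dual functionals separately: it passes to the Radon--Nikodym isomorphism $\Phi:\cE_1\to\bV$ with the $L^\infty$ cone of $F$-invariant nonnegative functions relative to $\mu_*=\Pi 1\,\dd\frm$, identifies the minimal invariant sets $E_j$, sets $\rho_j=\mathds{1}_{E_j}\Pi 1$ and $\mu_j=\mathds{1}_{E_j}\mu_*/\mu_*(E_j)$, and then obtains the functionals occurring in $\Pi$ as weak limits of Birkhoff averages $h_{j,n}=\frac1n\sum_{k<n}\mathds{1}_{E_j}\circ F^k$. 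The biorthogonality there is the nontrivial identity $\frm(h_k^*h_j^*)=\mu_*(E_j)\delta_{jk}$, which comes from invariance and the spectral structure, not from an $L^2$ inner-product formula. This also produces the ergodic basins $D_j$ (via $h_j^*=\mathds{1}_{D_j}$), which your sketch does not construct. Your last paragraph (physicality from $\cB_s\subset C^0$, exponential mixing from a spectral gap on each ergodic piece, ruling out cyclic peripheral spectrum) is the right picture but is entirely outsourced to \cite{CaLi}; the paper does rely on \cite{CaLi} for the fact that the peripheral eigenvalues form a finite group, but it then spells out the mixing argument via $\Pi(\phi\rho_j)=(\int\phi\,\dd\mu_j)\rho_j$, which is a step your proposal omits.
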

\begin{proof} The proof follows \cite[Appendix B]{BaGo} 
where piece-wise smooth hyperbolic maps are considered. We sketch the adapted proof here for the convenience of the reader.

The quasi-compactness of the operator $\cL_F$  and the existence of finitely many invariant probabilities follows from Theorem \eqref{thm:LY-F} and H\'ennion theorem \cite{Hen}. 
From \cite{CaLi} we also know that the eigenvalues of modulous 1 form a finite group, and their geometric and algebraic multiplicities are both finite. Let $h$ be such that $\cL_F h=h $ and $\Pi$ be the projector on the eigenspace $\cE_1$ corresponding to eigenvalue 1. 
So, $\Pi \varphi=\lim_{n} \frac 1n \sum_{k=0}^{n-1}\cL_F^k \varphi$ and the convergence holds in $\cB_s$. Moreover, letting $\dd \frm$ being the Lebesgue measure on $\bT^2$, for each $\varphi \in  \cB_s$ and $g\in L^1$, we have 
\[
\bigg|\int g\cdot\Pi \varphi \, \dd \frm\bigg| \le \|\varphi\|_{\infty} \bigg| \int g\cdot\Pi 1  \, \dd \frm \bigg|. 
\]
In particular, $h \, \dd \frm$ is absolutely continuous with respect to $$ \Pi 1 \, \dd \frm=: \mu_*\,.$$  Let us consider the space 
\[
\bV=\{\varphi \in L^\infty(\bT^2) : \varphi \ge 0, \;\varphi\circ F=\varphi\}
\]
and the map $\Phi: \cE_1\to \bV$ defined by $\Phi (h)=h_*$ where $h_*$ is the Radon-Nikodym derivative of the measure $h \, \dd \frm$ with respect to $\mu_*$. 
Since $\cC^r$ is dense in $\cB_s$, this map is an isomorphism (see \cite[Appendix B]{BaGo} for the details), and in particular, given any $h\in \cE_1$ with $h_* \in \bV$, the function $h_* \Pi 1 \in \cB_s\subset H^s$. 

Next, let 
\[
\bV_1=\{\varphi \in \bV : \mu_*(\varphi)=1\} \subset \bV.
\]

Then $\bV_1$ is a (non empty) convex cone in $\bV$ whose extremal points are of the form $\mathds{1}_E$ for some minimal invariant set $E$. Such extremal points are automatically linearly independent. Since $\operatorname{dim} \bV_1 < \infty$, there is only a finite number of them, say $\mathds{1}_{E_1},..., \mathds{1}_{E_\ell}$, and a function $\varphi$ belongs to $\bV_1$ if and only if it can be written as
$\varphi=\sum_{j}\alpha_j \mathds{1}_{E_j}$ for some scalars $\alpha_1,..,\alpha_\ell$. The decomposition of the function $1\in \bV_1$
is given by $1 = \sum_j\mathds{1}_{E_j}$, hence the sets $E_j$ cover the whole space up to a set of zero $\mu_*-$measure.
It follows that $\dfrac{\mathds{1}_{E_j}}{\mu_*(E_j)}\in \bV_1$ and  the measures  $$\mu_j:= \dfrac{\mathds{1}_{E_j}\cdot \mu_*}{\mu_*(E_j)}$$ are $F$-invariant ergodic probability measures.

Moreover, from the previous result, the functions \begin{equation}\label{eq:rhoi}\rho_j:= \mathds{1}_{E_j} \Pi 1 \in H^s
\end{equation}
and any element of $\bV_1$ is a linear combination of the $\rho_j$. Consequently, for each $\phi \in \cC^1$
\[
\begin{split}
\int \Pi(\phi  \rho_j) \mathds{1}_{E_m} \, \dd \frm& =\lim _{n \rightarrow+\infty} \frac{1}{n} \sum_{k=0}^{n-1} \int \mathcal{L}_F^k\left(\phi\rho_j\right) \cdot \mathds{1}_{E_m} \mathrm{\dd \frm} \\
& =\lim _{n \rightarrow+\infty} \frac{1}{n} \sum_{k=0}^{n-1} \int \phi \cdot \mathds{1}_{E_j} \cdot \mathds{1}_{E_m} \circ F^k \mathrm{~d} \mu_* \\
&=\int_{E_j \cap E_m} \phi \mathrm{~d} \mu_*=\mu_*\left(E_j\right) \delta_{j, m} \int \phi \mathrm{~d} \mu_m
\end{split}
\]
which implies that
\[
\Pi\left(\phi \rho_j\right)=\left(\int \phi \mathrm{~d} \mu_j\right) \rho_j.
\]
We can now conclude the exponential mixing statement: let $\varphi,\phi \in \cC^1(\bT^2)$ (for H\"older observables we can use a standard density argument), then for some $\ve \in (0,1)$,

\[
\begin{split}
\int \phi \cdot \varphi \circ F^n \, \dd \mu_j&=\frac{1}{\mu_*(E_j)} \int \cL^n(\phi \rho_j) \varphi \, \dd \frm\\
&=\frac{1}{\mu_*(E_j)} \int (\cL^n-\Pi)(\phi \rho_j) \varphi \, \dd \frm+\frac{1}{\mu_*(E_j)} \int \Pi(\phi \rho_j) \varphi \, \dd \frm\\
&= O(\varepsilon^n)+ \frac{1}{\mu_*(E_j)} \int \left(\int \phi \mathrm{~d} \mu_j\right) \rho_j \varphi \, \dd \frm\\
&=O(\varepsilon^n)+ \left(\int \phi \mathrm{~d} \mu_j\right) \int  \varphi \cdot\frac{1}{\mu_*(E_j)} \mathds{1}_{E_j} \Pi 1  \, \dd \frm\\
&= O(\varepsilon^n)+ \left(\int \phi \mathrm{~d} \mu_j\right) \int  \varphi \cdot\frac{1}{\mu_*(E_j)} \mathds{1}_{E_j} \, \dd \mu_*\\
&=  O(\varepsilon^n)+ \left(\int \phi \mathrm{~d} \mu_j\right) \int  \varphi \, \dd \mu_j,
\end{split}
\]
which proves that each $\mu_j$ is exponentially mixing.

Finally, we claim that there is a partition 
of all $\bT^2$ (up to a $\frm-$null set) into ergodic basins of the physical measures of $\mu_j$. In particular, there are $D_j \subset \bT^2$ such that $\frm(D_j\cap D_j)=0$ for $i \neq j$, $\frm(\sum_j \mathds{1}_{D_j})=1$ and, for each $f\in \cB_s$,
\begin{equation}\label{eq:projection}
\Pi(f)=\sum_{j} \frac{1}{\mu_*(E_j)}\frm(f \mathds{1}_{D_j})\rho_j,
\end{equation}
where the $\rho_j$ are defined in \eqref{eq:rhoi}. This prove the last statement of the proposition.

Let us suppose that $\Pi(f)=\sum_{j} \xi(f)\rho_j$ and let us find out who are the coefficients $\xi(f)$. Let us fix some $j$ and let us set
\[
h_{j,n}:=\lim_n \frac 1n \sum_{j=0}^{n-1}\mathds{1}_{E_j}\circ F^k \subset L^2(\bT^2).
\]
By the invariance of $E_j$,
\[
\lim_n \frm(h_{j,n}\cdot  f)=\langle \mathds{1}_{E_j}, \Pi(f) \rangle_{L^2}=\xi_j(f)\mu_*(E_j).
\]
Let $h_j^*$ be a weak limit of $h_{j,n}$. From the previous computation, we see that $h_j^*$ is $F$-invariant and such that
\begin{itemize}
    \item[(a)] $\xi_j(f)=\dfrac{\frm(h_j^* \cdot f)}{\mu_*(E_j)}$ 
    \item[(b)] $\xi_j(1)=1 \implies \frm(h^*_j)=\mu_*(E_j)$.
\end{itemize}
Moreover, again from invariance, for each $k$
\[
\begin{split}
\frm(h^*_k\cdot  h^*_j)&=\mu_*(E_j)\lim_n \xi_j\left(\mathds{1}_{E_k} \frac{1}{n}\sum_{r=0}^{n-1}\cL_F^r1\right)\\
&=\mu_*(E_j)\xi_j(\rho_k)=
\begin{cases} \mu_*(E_j) \qquad &\text{if} \quad j=k \\ 0 \qquad &\text{if} \quad j\neq k \end{cases}.
\end{split}
\]
It follows that $\frm((h^*_j)^2)=\mu_*(E_j)=\frm(h^*_j)$ which implies the existence of $D_j\in \bT^2$ such that
\begin{itemize}
    \item[(c)] $h_j^*=\mathds{1}_{D_j}$
    \item[(d)] $\frm(\mathds{1}_{D_j})=\mu_*(E_j)$
    \item[(e)] $\frm(D_j \cap D_k)=0$ for $j\neq k$.
\end{itemize}
The claim redly follows from (a)--(e).
\end{proof}

\section{Perturbed transfer operators}\label{sec:twisted}
In order to deal with complex transfer operators, for any real Banach space $(\cB, \|\cdot \|)$ defined in this paper we use the auxiliary norm $\|\cdot\|_{\cB,\bR}$ and the norm $\|\cdot\|_{\cB,\bC}$ (see Appendix \ref{app:complex}) to define the complexification $(\cB, \|\cdot\|_{\cB,\bC})$ of $(\cB, \|\cdot \|)$. 

If $u \in \bC$ with $u=\re (u) + i \im  (u)$, by equations \eqref{eq: complex} the DFLY inequalities hold in the space $(\cB_s, \|\cdot\|_{s,\bC})$, where $\|u\|_{s,\bC}=\|u\|_{H^s}+\|u\|^*_{s+2, \bC}$ and
\begin{equation}\label{eq:def-complex-norm}
    \|u\|^*_{\rho,\bC}:=\sup_{\theta \in [0,2\pi]} \left|e^{i\theta}\star \big(\re (u) , \im  (u)\big)\right|
\end{equation}
where the product $\star $ is defined in \Cref{app:complex}.
\begin{oss}\label{rmk:confusion}
Henceforth, when the function $u$ is clear from the context, we will not specify if we are using the complex or real norms, and the same symbol will be used for both, depending whether $u\in \cC^r(\bT^2,\bR)$ or $u\in \cC^r(\bT^2,\bC)$.
\end{oss}


Next, we define twisted (or perturbed) transfer operators. 

\begin{defn}
 Given a local diffeomorphism $ F\in \cC^r(\bT^2,\bT^2)$ and $\tau \in \cC^r(\bT^2, \bR)\,,$ for any $\nu \in \bR$ and $u\in L^1(\bT^2, \bC)$ we define the twisted transfer operator $\cL_\nu$ associated to $F$ by
\[
\cL_\nu u (x)=	\sum_{y \in F^{-1}(x)} \dfrac{u(y)}{\det DF(y)} e^{i\nu \tau(y)}.
\]
\end{defn}Iterating the above formula we see that for each $n\in \bN$
\begin{equation}\label{L^n}
\cL^n_\nu u (y)=\sum_{y\in F^{-n}(x)} \dfrac{u(y)}{\det DF^n(y)} e^{i\nu \tau_n(y)}.
\end{equation}
where is defined in \eqref{def:tau}.

Our results rely on the following two Lemmas on the quasi-compactness of the twisted transfer operator.

\begin{lem}[Quasi-compactness of \texorpdfstring{$\cL_\nu$}{Lg}]\label{lem:LYnu}
There exists $\sigma\in (0,1)$ such that, for each $\nu \neq 0$, the spectrum of $\cL_\nu$ acting on $\cB_s$ is contained in the disk $\{z\in \bC: |z|\le 1\}$ and the essential spectrum is contained in the disk $\{z\in \bC: |z|\le \sigma\}$. 
\end{lem}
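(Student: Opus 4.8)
The plan is to establish, for every $\nu\neq 0$, a Lasota--Yorke (DFLY) inequality for $\cL_\nu$ on the pair $(\cB_s,L^1)$ of the same shape as \eqref{LY-F}, with a contraction rate $\sigma\in(0,1)$ independent of $\nu$, and then to conclude by H\'ennion's theorem \cite{Hen} together with the compact embedding of $\cB_s$ in $L^1$. Two observations are immediate. Iterating the definition one gets $\cL_\nu^n u=\cL_F^n\!\big(e^{i\nu\tau_n}\,u\big)$ (compare \eqref{L^n} and \eqref{L_F}, using $\det DF>0$), and since $|e^{i\nu\tau_n}|\equiv 1$ we have the pointwise bound $|\cL_\nu^n u|\le\cL_F^n|u|$, whence $\|\cL_\nu^n u\|_{L^1}\le\|u\|_{L^1}$ by the first line of \eqref{LY-F}; in particular any eigenvalue of $\cL_\nu$ with eigenvector in $\cB_s\subset L^1$ has modulus at most $1$. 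Moreover $\cB_s$ is a Banach algebra and $e^{i\nu\tau}\in\cC^{r-1}(\bT^2,\bC)\subset\cB_s$, so multiplication by $e^{i\nu\tau}$ is bounded on $\cB_s$ and $\cL_\nu$, being $\cL_F$ followed by this multiplication, is a bounded operator on $\cB_s$.

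For the strong-norm half of the inequality I would re-run the proof of Theorem \ref{thm:LY-F} from \cite{CaLi}, carrying the extra weight $e^{i\nu\tau_n}$ along. There one bounds $\|\partial^\alpha\cL_F^n w\|_{L^2}$ and the curve integrals of \eqref{geom norm}, for $|\alpha|\le s+2$, by changing variables along the finitely many inverse branches of $F^n$; assumption \textbf{(A3)} guarantees that inverse branches send admissible central curves to admissible central curves, so the geometric norm is treated branch by branch as well. Taking $w=e^{i\nu\tau_n}u$ and applying the Leibniz rule, the only new terms are the old ones multiplied by a factor $\partial^\beta e^{i\nu\tau_n}$ ($|\beta|\le s+2$) read along an inverse branch. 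Such a factor is never bare: inside $\cL_F^n$ the preimage $y$ is a $\cC^r$ function of $x$ with $\partial_x y=(D_yF^n)^{-1}$, and, since $D_y\tau_n=\sum_{k=0}^{n-1}(D\tau)(F^k y)\,D_yF^k$,
\[
(D_y\tau_n)\,(D_yF^n)^{-1}=\sum_{k=0}^{n-1}(D\tau)(F^k y)\,\big(D_{F^k y}F^{\,n-k}\big)^{-1},
\]
so by \eqref{invariance of cone} and \eqref{partial hyperbolicity 2} each summand is $O(\lambda^{-(n-k)})$ on $\mathbf{C}_u$ and $O(\lambda_c^{\,n-k})$ on $\mathbf{C}_c$; the higher derivatives of $e^{i\nu\tau_n}$ are, by Fa\`a di Bruno, polynomial expressions in such inverse-branch differentials and in the derivatives of $\tau$ up to order $s+2$ (bounded since $\tau\in\cC^r$ and $r\ge s+3$). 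This is formally the same structure as that of the derivatives, up to order $s+2$, of $\log|\det DF^n|=\sum_{k=0}^{n-1}\log|\det DF|\circ F^k$, which is precisely the quantity estimated in \cite{CaLi}. Consequently the estimates of \cite{CaLi} go through essentially verbatim: the term with all derivatives on $u$ keeps the rate $\sigma_*^{\,n}$ (the weight contributing $1$ in modulus), while every term with at least one derivative on $e^{i\nu\tau_n}$ costs no more than an extra $C\max(1,|\nu|)^{s+2}$, a polynomial-in-$n$ factor, and a bounded power of the central expansion rate per iterate --- a loss that the pinching condition \textbf{(A4)} (with $\zeta_r=6(r+1)!$) is precisely designed to absorb; these terms are thus absorbed either into $\|u\|_s$ at a rate $\sigma\in(\sigma_*,1)$ still independent of $\nu$, or into $\|u\|_{L^1}$. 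One obtains constants $C_\nu>0$ and $D_{\nu,n}>0$ with
\[
\|\cL_\nu^n u\|_{L^1}\le\|u\|_{L^1},\qquad \|\cL_\nu^n u\|_{s}\le C_\nu\,\sigma^{\,n}\,\|u\|_s+D_{\nu,n}\,\|u\|_{L^1},\qquad n\in\bN,
\]
for all $u\in\cC^{r-1}(\bT^2,\bC)$.

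With these bounds in hand, H\'ennion's theorem \cite{Hen} --- applied with the compact embedding $\cB_s\hookrightarrow L^1$ and $\sup_n\|\cL_\nu^n\|_{L^1}<\infty$ --- gives that the essential spectral radius of $\cL_\nu$ on $\cB_s$ is at most $\lim_n(C_\nu\sigma^{\,n})^{1/n}=\sigma<1$, while its spectral radius is at most $1$ by the $L^1$ bound and the eigenvalue remark of the first paragraph; since $\sigma$ does not depend on $\nu$, both statements hold for every $\nu\neq0$, which gives the two assertions of the lemma. The part I expect to be the real work is the bookkeeping of the second paragraph: one must verify that inserting $e^{i\nu\tau_n}$ produces only the controllable combinations --- derivatives of $\tau$ pulled through the dynamics and composed with inverse-branch differentials --- and that the associated loss in the base of the geometric rate is genuinely covered by the margin built into \textbf{(A4)}. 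Since this amounts to repeating the (delicate but already carried out) computation of \cite{CaLi} with a uniformly controlled extra factor, no genuinely new mechanism should be needed; the rest of the argument is soft.
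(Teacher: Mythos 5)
Your proposal follows the same overall architecture as the paper's proof (DFLY inequality for $\cL_\nu$ with a $\nu$-independent strong-norm contraction rate, followed by H\'ennion's theorem together with the compact embedding $\cB_s\hookrightarrow L^1$), and the first paragraph — the $L^1$ bound, the spectrum lying in the closed unit disk, the boundedness of $\cL_\nu$ on $\cB_s$ via the algebra property — matches exactly. Where you diverge is in how the strong-norm estimate is actually produced: you propose to re-run the branch-by-branch proof of Theorem~\ref{thm:LY-F} from \cite{CaLi}, carrying the weight $e^{i\nu\tau_n}$ through each step, whereas the paper modularizes. It first proves a commutation formula (Lemma~\ref{lem:commute}) expressing $\partial^\alpha\cL_\nu^n u$ as $\cL_F^n$ applied to $e^{i\nu\tau_n}P^\alpha_{n,q}u$ plus a controlled sum of $\cL_F^n$ acting on products $(\cD^u_{j,k}u)^{\beta_{j,k}}(\cD^\tau_{j,k}\tau_n)^{\gamma_{j,k}}$, and then feeds these terms into the already-established DFLY estimates for $\cL_F$ (the weak-norm inequalities \eqref{L-Y B2}, the $H^s$ estimates \eqref{LY Hs and Hs-1}--\eqref{LY-HB0}) treated as black boxes, splitting the analysis into $\|\cdot\|^*_{s+2}$, $\|\cdot\|_{H^s}$, and a merging step. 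Your route would work but is strictly heavier, as it amounts to reverifying the whole of \cite{CaLi} rather than invoking it.

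There is one point where your argument genuinely glosses over a nontrivial step, and it is the one you dismiss as ``the weight contributing $1$ in modulus.'' The norm $\|\cdot\|^*_{s+2}$ is a curve-integral norm, not an $L^\infty$ norm, and when $u$ is complex valued one works with the complexified norm $\|\cdot\|^*_{\rho,\bC}$ of \eqref{eq:def-complex-norm}. Passing the modulus of $e^{i\nu\tau_n}$ through the curve integrals requires the real/imaginary-part decomposition carried out in Step~1 of the paper's proof (the chain of estimates around \eqref{eq:sqrt}, using $\cos(\nu\tau_n)$, $\sin(\nu\tau_n)$ bounded by $1$ together with the inequality $\int\phi\,\cL_F^n(fP^\alpha_{n,q}u)\circ\gamma\le\|f\|_\infty\|\cL_F^n(P^\alpha_{n,q}u)\|^*_0$), which costs a fixed constant factor (a $2$ or $2\sqrt{2}$ from \eqref{eq: complex}) but is not automatic. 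Related to this, your final inequality has constants $C_\nu$ and $D_{\nu,n}$ that you leave unspecified; the paper tracks them explicitly as $C(1+|\nu|)^{q_*}$ and $C(1+|\nu|)^{q_*}\lambda_c^n$, and while any subexponential-in-$n$ constant in front of $\|u\|_{L^1}$ is enough for this lemma, the polynomial dependence in $\nu$ is used in the proof of Theorem~\ref{thmB}, where one integrates over $|\nu|\lesssim\sqrt{n}$. So although your conclusion is right, the phrase ``the rest of the argument is soft'' understates the two pieces — the complexification bookkeeping and the uniform polynomial control in $\nu$ — that the paper handles explicitly.
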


\begin{proof}
First of all we see that $\|\cL_\nu\|_{L^1}\le \|\cL_F\|_{L^1}\le 1\,.$
Therefore, the spectrum of $\cL_\nu$ is contained in the disk $\{z\in \bC: |z|\le 1|\}$. 

We claim that there exists $q_* \in \bN$ and $\sigma \in (0,1)$ there is $C>0$ such that, for each $\nu \neq 0$, for all $n\in\bN$ and $ u\in \cB_s$,
\begin{equation}\label{LYnu}
\|\cL_\nu^n u\|_{s}\le C(1+|\nu|)^{q_*}\left(\sigma^n\|u\|_{s}+\| u \|_{L^{1}}\right).
\end{equation}
Thanks to the H\'ennion theorem (see \cite{Hen}), the above DFLY inequality proves that the essential spectral radius of $\cL_\nu$ in $\cB_s$ is strictly smaller than $1$. 

Let us prove the claim. By the definition od the norm $\|\cdot\|_s$ we will estimate the norms $\|\cL_\nu u\|^*_{s+2}$ and $\|\cL_\nu u\|_{H^s}$ separately in two steps and then we merge the estimates in the final step.\\

\noindent\textit{Step 1} (estimate of $\|\cL_\nu u\|^*_{s+2}$). Recall that the complex norm $\|\cdot\|^*_{s+2}$ is defined via \eqref{eq:def-complex-norm}.
For any $q\in \bN$, $\alpha \le q$, $\phi \in \cC^{|\alpha|}(\bT)$ and  $\gamma \in \Gamma$, due to \eqref{commute}, we have
\begin{equation}\label{eq:J1+J2}
\begin{split}
&\int \phi\, (\partial^\alpha \cL^n_\nu u)\circ\gamma \\ &=\int \phi\,  [\cL_F^n(e^{i\nu \tau_n}P^{\alpha}_{n,q}u)]\circ\gamma \,\\&\qquad\qquad+\int \phi \, \Big[\cL^n_F\Big( e^{i\nu\tau_n}\sum_{k=1}^s (i\nu)^k\sum_{j=1}^{m_k}  a_{j, k}(\cD^u_{j,k}u)^{\beta_{j,k}}(\cD^\tau_{j,k}\tau_n)^{\gamma_{j,k}}\Big)\Big]\circ\gamma\\
&= \cJ_1+\cJ_2
\end{split}
\end{equation}
where 
$$\cJ_1:=\int \phi\,  [\cL_F^n(e^{i\nu \tau_n}P^{\alpha}_{n,q}u)]\circ\gamma$$
and 
$$\cJ_2:=\int \phi \, \Big[\cL^n_F\Big( e^{i\nu\tau_n}\sum_{k=1}^s (i\nu)^k\sum_{j=1}^{m_k}  a_{j, k}(\cD^u_{j,k}u)^{\beta_{j,k}}(\cD^\tau_{j,k}\tau_n)^{\gamma_{j,k}}\Big)\Big]\circ\gamma.$$
By Lemma \eqref{lem:commute} the term $\cJ_2$ involved only derivatives of $u$ up to order $\alpha-1\leq q-1$ so that, using the last statement of Lemma \ref{lem:commute}, equations \eqref{eq: complex} and \eqref{L-Y B2}, we can bound real part and imaginary part of $\cJ_2$ with  
$
C_{s,\tau}\lambda_c^n(1+|\nu|)^q \| u \|^*_{q-1}
$
for some constant $C_{s,\tau}>0$. 

Let us estimate $\cJ_1$. Setting
\[
x=\operatorname{Re}(P^\alpha_{n,q}u)  \qquad \mathrm{and} \qquad y=\operatorname{Im}(P^\alpha_{n,q}u),
\]
to be respectively the real part and the imaginary part of $P^\alpha_{n,q}u$, by \eqref{eq: complex}\footnote{Recall Remark \ref{rmk:confusion} for the following abuse of notation: for $f$ real, $\|f\|^*_0$ is the norm defined in \eqref{geom norm}, while if $f$ is complex valued, then $\|f\|^*_0$ is defined analogously to \eqref{eq:def-complex-norm}.}
\begin{equation}\label{eq:sqrt}
\begin{split}
|\re \cJ_1|&\le \|\cL^n_F(x e^{i\nu \tau_n})\|^*_0+\|\cL^n_F(y e^{i\nu \tau_n})\|^*_0\\
&\le \left[(\|\re(\cL^n_F(x e^{i\nu \tau_n})\|^*_0)^2+(\|\im(\cL^n_F(x e^{i\nu \tau_n})\|^*_0)^2\right]^{\frac 12}\\
&\qquad\qquad+\left[\|\re(\cL^n_F(y e^{i\nu \tau_n})\|^*_0)^2+(\|\im(\cL^n_F(y e^{i\nu \tau_n})\|^*_0)^2\right]^{\frac 12}.
\end{split}
\end{equation}
We thus want to estimate each of the terms in the square roots. Since for any $\phi \in \cC^q$ with $\|\phi\|_{\cC^q}\le 1$, any  $u\in \cC^r(\bT^2,\bR)$ and any $f \in \cC^0(\bT^2,\bR)$, we have
\[
\begin{split}
\int \phi \, \cL_F^n(f P^{\alpha}_{n,q}u)\circ\gamma%
&=\sum_{\tilde \gamma \in F^{-n}\gamma}\int \phi \, \frac{(f P^{\alpha}_{n,q}u)\circ\tilde\gamma }{J_n \circ \tilde \gamma}\\%
&\le \sup_{\psi \in \cC^0: \|\psi\|_{\cC^0}\le 1} \int \psi \sum_{\tilde \gamma \in F^{-n}\gamma}\frac{(fP^{\alpha}_{n,q}u)\circ\tilde\gamma }{J_n \circ \tilde \gamma}\le \|f\|_{\infty} \|\cL_F^n(P^\alpha_{n,q}u)\|_0^*\,,
\end{split}
\]
and
\[
\|\re(\cL^n_F(x e^{i\nu \tau_n})\|^*_0 =\|\cL_F^n(x\cos(\nu \tau_n))\|^*_0\le \|\cL_F^n(x)\|^*_0.
\]
Obviously, an analogous estimate is true for all the other terms in the square roots in \eqref{eq:sqrt}, from which we deduce, using \eqref{eq: complex} again, that
\[
\begin{split}
|\re \cJ_1 | &\le [(\|\cL^n_F(x)\|^*_0)^2+(\|\cL^n_F(x)\|
^*_0)^2]^{1/2}+[(\|\cL^n_F(y)\|^*_0)^2+(\|\cL^n_F(y)\|
^*_0)^2]^{1/2}\\
&=\sqrt 2 (\|\cL^n_F(x)\|^*_0+\|\cL^n_F(y)\|^*_0)\\
&\le 2 \|\cL^n_F(x+iy)\|^*_0\\
&=2 \|\cL_F^n(P^\alpha_{n,q}u)\|^*_0.
\end{split}
\]

Clearly, the same estimate holds for $\im \cJ_1$ and, recalling \eqref{eq: LnP=PLn}, we thus have
\[
\sqrt{(\re(\cJ_1)^2+(\im(\cJ_1)^2}\le  2\|\cL_F^n(P^\alpha_{n,q}u)\|_0^*=2\|\partial^\alpha\cL_F^n u\|_0^*\le C\|\cL_F^n u\|_q^*.
\]

Taking the sup over $\phi\in \cC^{|\alpha|}$, $\gamma \in \Gamma$, and $|\alpha|\le q$ in \eqref{eq:J1+J2}, by the above estimates we have\footnote{We also use the elementary inequality $\sup \sqrt{a^2+b^2}\le \sqrt{(\sup a)^2+(\sup b)^2}$.}
\begin{equation}\label{LYstar}
\|\cL_\nu^n u\|^*_{q}\le C\|\mathcal{L}_F^n u\|^*_{q}+C_{s,\tau}\lambda_c^n(1+|\nu|)^{q} \| u \|^*_{q-1}.
\end{equation}
Note that the above estimates are easier in the case $q=0$ and give, using the first of \eqref{L-Y B2}, also the weak inequality
\begin{equation}\label{eq:weak-special}
    \|\cL_\nu^n u\|^*_{0}\le C\|\cL_F^n u\|^*_0\le C\lambda_c^n \|u\|^*_0. 
\end{equation}
By the second part of \eqref{L-Y B2} (recall also that $\|\cdot\|_0 \le \|\cdot \|_{q-1}$) we have
\[
\|\cL_\nu^n u\|^*_{q}\le C\delta_*^n\| u\|^*_{q}+C_{s,\tau}\lambda_c^n(1+|\nu|)^{q} \| u \|^*_{q-1}.
\]
{Let us fix $n_0$ such that $C\delta_*^{n_0}=: \sigma_0 
<1$ and $
\sigma_0\lambda_c<1$, then the above inequality gives
\[
\|\cL_\nu^{n_0} u\|^*_{q}\le 
\sigma_0^{n_0}\| u\|^*_{q}+C_{s,\tau,n_0}(1+|\nu|)^{q} \| u \|^*_{q-1}.
\]
Therefore, writing any $n\in \bN$ as $n=kn_0+m$ with $0\le m\le n_0-1$, iterating the last inequality, using \eqref{eq:weak-special}, and slightly increasing the value of $\sigma_0$ to $\sigma_1$}, we obtain

\begin{equation}\label{eq:star1}
\begin{split}
\|\cL_\nu^{n} u\|^*_{q}&=\|\cL_\nu^{n_0}(\cL_\nu^{n-n_0}u)\|^*_{q} \le 
{\sigma_0^{n_0}}\|\cL_\nu^{n-n_0}u\| ^*_q+C(1+|\nu|)^q \|\cL_\nu^{n-n_0}u\|^*_{q-1}\\
&\le C\sigma_1^n \|u\|^*_q+C(1+|\nu|)^q \sum_{j=0}^{k-1}(\sigma_1 \lambda_c)^{jn_0}\|u\|^*_{q-1}\\
&\le C\sigma_1^n \|u\|^*_q+C(1+|\nu|)^q \lambda_c^n \|u\|^*_{q-1}.
\end{split}
\end{equation}
 Next, for reasons which will be clear soon, we want the $\|\cdot\|^*_0$- norm to be the weak norm. We can achieve that by iterating the above inequality $q$ times. We get:
 \[
\|\cL_\nu^{qn} u\|^*_{q}\le C_q \sum_{j=0}^{q-1}(\sigma_1^{q-j}\lambda_c^j)^n (1+|\nu|)^{\sum_{i=0}^{j}q-i}\|u\|^*_q+ C_q(1+|\nu|)^{\sum_{i=0}^{q-1}q-i} \lambda_c^{qn}\|u\|
^*_0. 
\]
Since $\sigma_1 \lambda^{\xi(q)}<1$ by assumption \eqref{pinching condition}, the worst case in the first term is given by $j=q-1$, so that
\[
\|\cL_\nu^{qn} u\|^*_{q}\le C_q (1+|\nu|)^{\frac{q(q+1)}{2}}\left[ (\sigma_1\lambda_c^{q-1})^n \|u\|^*_q+ \lambda_c^{qn}\|u\|
^*_0\right]. 
\]
Writing any $n\in \bN$ as $n=kq+m$ with $0\leq m\leq q-1$, and using the above inequality with $qk$ and the first part of \eqref{L-Y B2} we obtain
\begin{equation}\label{eq:LYgood}
\|\cL_\nu^{n} u\|^*_{q}\le C_q (1+|\nu|)^{\frac{q(q+1)}{2}}\left[ (\sigma_1\lambda_c^{q-1})^n \|u\|^*_q+ \lambda_c^{n}\|u\|
^*_0 \right], \qquad \forall n\in \bN. 
\end{equation}
This concludes the first step.\\

\noindent\textit{Step 2} (estimate of $\|\cL^n_\nu u\|_{H^s}$).
Let us fix $n_0\in \bN$. By inequality \eqref{LY-HB0}, 
\begin{equation}\label{eq:hatsig}
\begin{split}
\|\cL_\nu^{2n_0}u\|_{H^s}&=\|\cL_F^{n_0}(e^{i\nu\tau_{n_0} }\cL_\nu^{n_0} u)\|_{H^s}\\
&\le C \hat \sigma^{n_0} \|e^{i\nu\tau_{n_0} }\cL^{n_0}_\nu u\|_{H^s}+C\|e^{i\nu\tau_{n_0}} \cL_\nu^{n_0} u\|^*_{s+2}.
\end{split}
\end{equation}
Let us estimate the two norms in the right hand side above. By \eqref{LY Hs and Hs-1} and equation \eqref{commute} applied with\footnote{Note that, when $F$ is the identity map $\operatorname{Id}$, the operators $P_{n,s}^\alpha$ are just the derivatives.} $F=\operatorname{Id}$ we have
\[
\begin{split}
\|e^{i\nu\tau_{n_0} }\cL^{n_0}_\nu u\|_{H^s}
&\le C\lambda_c^{(2s+1){n_0}}\|e^{i\nu\tau_{n_0}}u\|_{H^s}\\
&\le C\lambda_c^{(2s+1){n_0}}\|u\|_{H^s}+ C^{n_0}(1+|\nu|)^s\|u\|_{H^{s-1}}.
\end{split}
\]
Since (\cite[Lemma E.1]{CaLi}) for each $\ve>0$ we can find $C_\ve$ such that
\[
\|u\|_{H^{s-1}}\le C \ve \|u\|_{H^s}+C_\ve \|u\|_{L^1},
\]
taking $\ve$ small enough we get
\[
\|e^{i\nu\tau_{n_0} }\cL^{n_0}_\nu u\|_{H^s}\le C\lambda_c^{(2s+1){n_0}}\|u\|_{H^s}+C_{n_0}(1+|\nu|)^s\|u\|_{L^1}.
\]

Similarly, for the second term in \eqref{eq:hatsig} we first  exploit \eqref{commute} and \eqref{LYstar} for $F=\operatorname{Id}$ and $u$ replaced by $\cL^{n_0}_\nu u$, and then we use inequality \eqref{eq:LYgood}, to get
\[
\begin{split}
\|e^{i\nu\tau_{n_0}} \cL_\nu^{n_0} u\|^*_{s+2}
&\le C (1+|\nu|)^{(s+2)(s+3)/2}[(\sigma_1\lambda_c^{s+2})^{n_0}\|u\|^*_{s+2}+C_{n_0}\|u\|^*_{0}],
\end{split}
\]
 Inserting the two above estimates into \eqref{eq:hatsig}, for each $s$ we find $\sigma_2\in (0,1)$ and $q_1>0$ such that
\[
\|\cL_\nu^{2{n_0}}u\|_{H^s}\le C(1+|\nu|)^{q_1}[\sigma_2^{n_0}(\|u\|_{H^s}+\|u\|^*_{s+2})+C_{n_0}(\|u\|^*_0+\|u\|_{L^1})].
\]
Writing any $n=kn_0+m$ and iterating the above inequality yields
\begin{equation}\label{eq:LYgood2}
\begin{split}
\|\cL_\nu^{{n}}u\|_{H^s}&\le C(1+|\nu|)^{q_2}[\sigma_2^{n}(\|u\|_{H^s}+\|u\|^*_{s+2})+C(\|u\|^*_0+\|u\|_{L^1})]\\
&= C(1+|\nu|)^{q_2}[\sigma_2^{n}\|u\|_s+C\lambda_c^n(\|u\|^*_0+\|u\|_{L^1})],
\end{split}
\end{equation}
for some $q_2>0$ depending on $s$. This concludes the second step.\\

\noindent\textit{Step 3} (Merging the estimates). By \eqref{eq:LYgood} and \eqref{eq:LYgood2} we obtained:
\[
\begin{split}
\|\cL^n_\nu u\|_s& =\|\cL^n_\nu u\|_{H^s}+\|\cL^n_\nu u\|_{s+2}^*\\
&\le   C_q (1+|\nu|)^{\frac{(s+2)(s+3)}{2}}\left[ (\sigma_1\lambda_c^{q-1})^n \|u\|^*_q+ \lambda_c^{n}\|u\|
^*_0 \right]\\
&+C(1+|\nu|)^{q_2}[\sigma_2^{n}\|u\|_s+C\lambda_c^n(\|u\|^*_0+\|u\|_{L^1})]\\
&\le C(1+|\nu|)^{q_*}[\sigma_3^{n}\|u\|_s+C\lambda_c^n(\|u\|^*_0+\|u\|_{L^1})] 
\end{split}
\]
for some $q_*>0$ and $\sigma_3 \in (0,1)$.
Finally, we want to compare the $\|\cdot\|^*_0$-norm with the $L^1$ norm. We use the following (see \cite[(9.11)]{CaLi}) which holds for each $\ell>0$:\[
\|u\|^*_0\leq \ell^{-1} \|u\|_{L^1}+ \frac {2\ell^{\frac 12}}3\|u\|_{H^1}.
\]
Choosing $\ell$ sufficiently small and possibly slightly changing $\sigma_3$ and $C$, we then obtain
\begin{equation*}\label{LYB}
\|\cL^n_\nu u\|_s \le  C(1+|\nu|)^{q_*}[\sigma^{n}\|u\|_s+C\lambda_c^n(\|u\|_{L^1})]. 
\end{equation*} 
for some $\sigma\in (0,1)$. 
\end{proof}

\begin{lem}\label{lem:NoUnitEigen}
In addition to the assumptions of \Cref{lem:LYnu}, if \emph{\textbf{(B2)}} holds, then the spectrum of $\cL_\nu$ is contained in the open disk $\{z\in \bC: |z|<1\}$.   
\end{lem}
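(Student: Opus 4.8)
The plan is to rule out peripheral eigenvalues of $\cL_\nu$ by a standard Lasota–Yorke/Hennion argument combined with a contradiction based on the non-lattice condition \textbf{(B2)}, adapted to the non-ergodic setting. Suppose, for contradiction, that there exists $\nu\neq 0$ and $\zeta\in\bC$ with $|\zeta|=1$ and $h\in\cB_s$, $h\neq 0$, such that $\cL_\nu h=\zeta h$. By the DFLY inequality \eqref{LYnu} from \Cref{lem:LYnu}, such an $h$ lies in the finite-dimensional peripheral eigenspace, so in particular $h\in\cB_s\subset\cC^0(\bT^2)$, and the eigenrelation iterates to $\cL_\nu^n h=\zeta^n h$. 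The first step is to obtain the pointwise inequality $|h|\le\cL_F|h|$: indeed, from \eqref{L^n},
\[
|h(x)|=|\zeta^n h(x)|=\Bigl|\sum_{y\in F^{-n}(x)}\frac{h(y)}{\det DF^n(y)}e^{i\nu\tau_n(y)}\Bigr|\le\sum_{y\in F^{-n}(x)}\frac{|h(y)|}{\det DF^n(y)}=\cL_F^n|h|(x).
\]
Since $\cL_F$ preserves $\frm$-integrals and $|h|\ge 0$, integrating gives $\frm(|h|)\le\frm(\cL_F^n|h|)=\frm(|h|)$, so equality holds $\frm$-a.e. and $\cL_F|h|=|h|$; hence $|h|\in\operatorname{Ker}(\cL_F-\Id)\cap\cB_s$, so by \Cref{prop:spectral}(ii), $|h|$ is a non-negative linear combination $\sum_k a_k\rho_k$ of the ergodic densities. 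Restricting attention to one ergodic basin $D_j$ on which $a_j>0$ (such a $j$ exists since $h\neq 0$), we may assume after normalization that $|h|=\rho_j>0$ on $\supp\rho_j$, i.e. on a full-measure subset of $D_j$.

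The second step is to extract the phase. On $\supp\rho_j$ write $h=\rho_j\,e^{i\theta}$ for a measurable (in fact, since $h\in\cC^0$ and $\rho_j$ is continuous and positive on the interior of its support) continuous function $\theta:D_j\to\bR/2\pi\bZ$. The equality case in the triangle inequality above — valid $\frm$-a.e. for every $n$, and then on all preimage branches by the usual argument — forces, for $\frm$-a.e.\ $x\in D_j$ and each $y\in F^{-n}(x)\cap D_j$, the alignment
\[
e^{i\nu\tau_n(y)}e^{i\theta(y)}=\zeta^n e^{i\theta(x)}.
\]
Taking $n=1$ this reads $\nu\tau(y)+\theta(y)-\theta(F(y))\in\arg\zeta+2\pi\bZ$ for $\frm$-a.e.\ $y\in D_j$. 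Writing $\arg\zeta=2\pi\alpha_0$ and $\Psi=\theta/\nu$, we get that $\tau|_{D_j}+\Psi-\Psi\circ F$ takes values in $\tfrac{2\pi}{|\nu|}\bZ+\tfrac{2\pi\alpha_0}{\nu}$, i.e.\ it is $a\bZ+b$-valued with $a=2\pi/|\nu|>0$; moreover $\Psi\in C^0(D_j,\bR)$. This directly contradicts \textbf{(B2)}.

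The main obstacle — and the step requiring the most care — is the regularity and well-definedness of the phase function $\theta$ on the ergodic basin: one needs $\rho_j$ to be bounded away from zero on a large enough set so that $h/|h|$ makes sense and is continuous, and one needs to promote the $\frm$-a.e.\ coboundary identity to a genuine continuous-coboundary statement matching exactly the hypothesis \textbf{(B2)} (which demands $\Psi\in C^0(D,\bR)$). Here one exploits that $\rho_j\in H^s\subset\cC^0$ with $s>1$, that $\rho_j$ is a density of a physical measure hence strictly positive on the interior of $\supp\rho_j$, and a standard argument (cf.\ \cite[Appendix B]{BaGo}, and the analogous reasoning in the ergodic case) showing that an a.e.\ solution of the cohomological equation with continuous data admits a continuous version on the support of the measure. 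Once $\theta$ is continuous and $F$-equivariant up to the additive constant, restricting to the ergodic component and invoking \textbf{(B2)} closes the contradiction, proving that $\cL_\nu$ has no eigenvalue on the unit circle and hence, together with \Cref{lem:LYnu}, that its spectrum lies in $\{|z|<1\}$.
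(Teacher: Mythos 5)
Your proposal follows essentially the same route as the paper: from a putative peripheral eigenfunction $\cL_\nu h=\zeta h$ you deduce $\cL_F|h|=|h|$ by the $L^1$-norm/triangle-inequality argument, identify $|h|$ with a combination of the $\rho_k$, write $h=|h|e^{i\theta}$ with continuous phase on one ergodic component, obtain the cohomological relation $\nu\tau+\theta-\theta\circ F\in\arg\zeta+2\pi\bZ$ from the equality case of the triangle inequality across preimage branches, and then contradict \textbf{(B2)}. The only cosmetic difference is that you make the phase-alignment (equality-case) argument explicit, whereas the paper reaches the same cohomological identity by comparing real and imaginary parts of $\cL_F(e^{i\psi}|h|)=\cL_F(|h|)$; both routes are standard and equivalent.
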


\begin{proof}
Assume for contradiction that there exist $\theta$ and $h \in \cB_s$ (hence, continuous) such that $\cL_\nu h = e^{i\theta} h$. Then, $|h| = |e^{i\theta} h| = |\cL_\nu h| \leq \cL_F|h|$. But $\int|h|\dd\frm = \int \cL_F |h|\dd \frm$. So, it must be the case that $\cL_F|h| = |h|\,.$ Since $h \in \cB_s$ we have that $|h| \in \cB_s$. So, recalling \eqref{eq:projection}, $$|h| = \sum_j \frac{\int |h|\mathds{1}_{D_j}\dd\frm}{\mu_*(E_j)} \rho_j\,.$$ 
In particular, there is $j_0$ such that $|h|>0$ on $\supp \rho_{j_0}$. 
Write $h = |h|e^{ig_\nu}$ on $\supp \rho_{j_0}$. Since $h$ and $|h|$ are continuous, we can take $g_\nu$ to be continuous.  
Also, note that $\cL_\nu(h) = e^{i\theta}h = e^{\theta+g_\nu}|h|$ which implies that $\cL_F(e^{i(\nu \tau + g_\nu-g_\nu\circ F - \theta)}|h|) = |h| = \cL_F(|h|)\,.$ Then, $$\int_{\supp \rho_{j_0}} \im\, (e^{i(\nu \tau + g_\nu-g_\nu\circ F - \theta)}|h|)\dd\frm = \int_{\supp \rho_{j_0}} \im\, \cL_F(e^{i(\nu \tau + g_\nu-g_\nu\circ F - \theta)}|h|)\dd \frm = 0\,.$$ Therefore, 
$$\int_{\supp \rho_{j_0}} \sin (\nu \tau + g_\nu-g_\nu\circ F - \theta) |h| \dd \frm =0\,. $$
Hence, $\nu \tau+g_\nu-g_\nu\circ F+\theta=2\pi N$ for some $N:\supp \rho_{j_0}\to \bZ$. This contradicts assumption \textbf{(B2)} because it implies that
\[
\tau=\frac{2\pi N}{\nu}-\frac{\theta}{\nu},
\]
up to a continuous coboundary term.
\end{proof}

\section{Proofs of the main results}\label{sec:proof-thmA} 

\begin{proof}[Proof of \Cref{thmA}]

We verify the conditions in \Cref{app:AbstractCLT} as follows. Take $\bT^2$ with Borel $\sigma$-algebra and Lebesgue measure, $\frm$, as $(E, \mathcal{E}, \frm)$, the Banach algebra introduced in \Cref{sec:TraOp}, $\cB_s$, as $\cB$, and an SVPH, $F$, as the non-singular dynamical system.  Then, conditions (1),(2) and (3) follows from Proposition \ref{prop:spectral}. 

Both conditions (4) and (5-r) for all $r$ follow from \cite[Lemma 4.6]{FePe}. In fact, $\nu \mapsto \cL_\nu(\cdot) = \cL_F( e^{i\nu\tau } \cdot)$ is analytic. To see this, note that 
$$\cL_F( e^{i\nu\tau } \cdot) = \sum_{k=0}^{\infty} \frac{(i\nu)^k}{k!} \cL_F(\tau^k \cdot) $$
and the series converges in $\cB_s$ for all $\nu$ due to Weierstrass M-test because $$\|\cL_F(\tau^k \cdot)\| \leq \|\tau\|^k_{C^{r-1}}\|\cL_F\|\,.$$
\end{proof}


\begin{proof}[Proof of \Cref{thmB}]
The first part of the theorem, the LLT \eqref{fLLT}, follows directly from \Cref{thm:Hehe3}
because we have already verified the conditions in the theorem in the previous proof, and we make the assumption \textbf{(B2)} of $\tau$ being non-arithmetic. 

For the second part, let $\dd m = f_m\, \dd\frm$ be the initial probability measure.  Note that using the duality of the transfer operator, the characteristic function of $\tau_n = \sum_{k=0}^{n-1}\tau \circ F^k$ can be coded as $$\mathbb E_{m}(e^{i\nu \tau_n})= \frm(\cL^n_\nu(f_m))\,.$$
Due to \Cref{eq:EigenSplit}, there exists $\varepsilon>0$ such that for all $\nu \in (-\varepsilon,\varepsilon)$,
$$\mathbb E_{m}(e^{i\nu \tau_n}) = \sum_{k=1}^\ell\lambda_k(\nu)^n  \frm(\rho_k)\mu_k(f_m) +  \frm( K_n(\nu)f_m) +\frm(\Lambda^n(\nu)f_m)$$
where all the symbols are as in \Cref{thm:Hehe1}. 

As in the proof of \cite[Theorem A']{HeHe}, we have that
$$\lambda_k\left(\frac{\nu}{\sqrt{n}}\right)^n =\sum_{k=1}^\ell \frm(\rho_k)\mu_k(f_m) e^{-\frac{\sigma^2_k\nu^2}{2}+O(\frac{1}{\sqrt{n}})}\,.$$
Also, we know that $|\frm( K_n(\nu)f_m)| \leq C\max_k{|\lambda_k(\nu)|^n} \inf\{|\nu|,1\} \|\frm\|\|f\|\,.$

Let $L \in \mathbb R$ be such that $0<\varepsilon<L$. From \Cref{lem:NoUnitEigen}, it follows that there exists $c\in (0,1)$ such that, for all $\nu$ such that $|\nu| \in [\varepsilon, L]$,  $|\mathbb E_{m}(e^{i\nu \tau_n})|\leq Cc^n\,.$

Let $Z$ be an independent and a zero average random variable such that $|Z|\leq 1$ and with a smooth density $\psi$. Consider the random variable $\tilde{\tau}_n = \frac{\tau_n}{\sqrt{n}}+rZ$. Then $\tilde{\tau}_n$ has a density, $\mathcal{N}_{n, r}$, that satisfies

\[
\begin{split}
\mathcal{N}_{n,r}(y)= & \frac{1}{2 \pi} \int_{\mathbb{R}} e^{-i\nu y} \mathbb{E}\left(e^{i\nu \tilde{\tau}_n}\right) d\nu \\
= & \frac{1}{2 \pi} \int_{\mathbb{R}} e^{-i\nu y} \mathbb E_{m}(e^{i\frac{\nu}{\sqrt{n}} \tau_n}) \widehat{\psi}(\varepsilon\nu) d\nu \\
= & \frac{1}{2 \pi} \int_{-\varepsilon\sqrt{n}}^{\varepsilon\sqrt{n}} e^{-i\nu y}\left[\sum_{k=1}^\ell c_k e^{-\frac{\sigma^2_k\nu^2}{2}+O(\frac{1}{\sqrt{n}})}+\frm( K_n\left(\frac{\nu}{\sqrt{n}}\right)f_m)+O\left(c^n\right)\right] \widehat{\psi}(\varepsilon\nu) d\nu \\
&\quad\quad+O\left(c^n\right)+\frac{1}{2 \pi} \int_{|\nu| \geqslant L\sqrt{n} } e^{-i\nu y} \mathbb E_{m}(e^{i\frac{\nu}{\sqrt{n}} \tau_n})  \widehat{\psi}(\varepsilon\nu) d\nu
\end{split}
\]
where $\, \widehat{\cdot}\,$ denotes the Fourier transform. To obtain the complete asymtptotics, we use \Cref{thm:Hehe1} and note that  for $n$ sufficiently large, there exists $c>0$ such that
\begin{align*}
    \int_{- \varepsilon\sqrt{n}}^{ \varepsilon\sqrt{n}}\frm(K_n\left(\frac{\nu}{\sqrt{n}}\right)f_m)\, \dd\nu &\lesssim \int_{-1}^{1}\frac{t}{\sqrt{n}}\,\dd\nu+\int_{1\leq|\nu|\leq \varepsilon\sqrt{n}}\frac{|t|}{\sqrt{n}} e^{-ct^2/4}\,dt\\ &=O\left(\frac{1}{\sqrt{n}}\right)+O\left(\frac{1}{\sqrt{n}}\right)e^{-ct^2/4}\Big|_{1}^{ \varepsilon\sqrt{n}} = O\left(\frac{1}{\sqrt{n}}\right). 
\end{align*}

Using that $|\widehat{\psi}(\nu) |\lesssim |v|^{-\delta}$ for all $\delta>0$, we have
$$\mathcal{N}_{n, r}(y) = \sum_{k=1}^\ell\frac{c_k}{\sqrt{2\pi \sigma^2_k}} e^{-\frac{y^2}{2\sigma^2_k}} + O \left(\frac{1}{\sqrt{n}}+\frac{1}{\ve^{\delta}n^{(\delta-1)/2}}\right)\,$$
where $c_k =\frm(\rho_k)\mu_k(f_m)$.

So, to have a meaningful estimate, we set $\delta>2$. From now on we can argue almost exactly as in the proof \cite[Theorem 1.22]{DeKiLi}: on the one hand we have,
\[
\begin{split}
\mathbb{P}_m\left(\frac{\tau_n}{\sqrt n} \in[a, b]\right)&\le \mathbb{P}_m\left(\frac{\bar{\tau}_n}{\sqrt n} \in[a-\ve, b+\ve]\right)\\
& \leqslant \int_{a-\varepsilon}^{b+\varepsilon}\sum_{k=1}^\ell\frac{c_k}{\sqrt{2\pi \sigma^2_k}} e^{-\frac{y^2}{2\sigma^2_k}}dy + (b-a)\,O \left(\frac{1}{\sqrt{n}}+\frac{1}{\ve^{\delta}n^{(\delta-1)/2}}\right)  \\
& \leqslant \sum_{k=1}^dc_k \int_{a}^b \fn\left(\frac{y }{\sigma_k}\right)\, dy+O\left(\varepsilon {\max_k \fn\left(\frac{a}{\sigma_k}\right)}\right)\\
&\quad\quad + (b-a)\,O\left(\frac{1}{\sqrt{n}}+\frac{1}{\ve^{\delta}n^{(\delta-1)/2}}\right)
\end{split}
\]
and similarly for the lower bound. Choosing $\ve=n^{\frac{1}{\delta}-\frac 12}$ we conclude that the error estimate is up to a constant times
$$\left(\frac{\max_k \fn\big(\frac{a}{\sigma_k}\big)}{n^{\frac 12-\frac 1\delta}}+\frac{b-a}{\sqrt{n}}\right)\,.$$ 
The statement on the $\sigma_k$, and the conclusion, follow from the Green-Kubo formula
\[\sigma^2_k = \mu_k(\tau_k^2)+2\sum_{j=1}^\infty \mu_k(\tau_k\cdot\tau_k\circ F^j).\qedhere\]
\end{proof}

\begin{appendix}

\section{Abstract functional analytic framework for the CLT}\label{app:AbstractCLT}

Let $(E, \mathcal{E}, \frm)$ be a measure space. Suppose $(\cB, \|\cdot\|)$ is a complex Banach lattice\footnote{$f\in \cB \implies |f|, \bar f \in \cB$} and algebra\footnote{for all $f, g \in \cB$, $f\cdot g \in \cB$} of measurable function on $E$. 

Let $F$ be a non-singular transformation on $E$. Let the Transfer operator $\cL_F : \cB \to \cB$ be defined by 
$$\frm (f \circ F \,\, g) = \frm (f \,\cL_F(g) ),\,\,\,\forall g \in L^{\infty}\,.$$
Suppose that:
\begin{enumerate}
    \item  $\cL_F$ is quasi-compact with spectral radius $r(\cL_F)=1$,\footnote{$\cB = J \oplus H$ with $J,H$ $\cL_F$-invariant $r(\cL_F|_H) < 1$, $\dim J < \infty$, and each eigenvalue of $\cL|_J$ has modulus $1$.} and $\sup_n \|\cL^n\| < \infty\,.$
    \item There exist $\ell$ non-negative functions of $\cB$, denoted by $\rho_1,\dots,\rho_\ell$ that forms a basis for Ker$(\cL_F - \Id) \cap \cB\,$ and $\frm\left(\bT^2 \setminus \bigcup_{j=1}^d \supp \rho_j\right) =0\,.$\footnote{This assumption can be dropped by restricting the spaces to $\bigcup_{j=1}^\ell \supp \rho_j$ as explained in \cite{HeHe}.}
    \item There exist $\cL_F-$invariant probability measures $\mu_1, \dots,\mu_\ell$ in $\cB^\prime$ such that $\mu_k(\rho_m) = \delta_{k,m}$. 
\end{enumerate}

\vspace{10pt}

Let $\tau$ be a measurable function and $\nu \in \bR$ and the twisted transfer operators $\cL_\nu$ be
$$\cL_{\nu}(\cdot) := \cL_F(e^{i\nu \tau} \cdot)\,.$$
Suppose that:
\begin{enumerate}[start=4]
    \item  For all $\nu$, $\cL_\nu$ is a bounded linear operator on $\cB$.
    \item[(5-r)]There exists $\delta>0$ such that $\nu \mapsto \cL_{\nu}$ is $r$ times continuous differentiable as a function from $(-\delta, \delta)$ to $\fC(\cB,\cB)$, the space of linear bounded operators on $\cB$.\footnote{The derivative can be formally computed, e.g., $k$th derivative at $\nu$ is $\cL((i\tau)^ke^{i\nu\tau}\cdot)$} 
\end{enumerate}

\begin{oss}
In \cite{HeHe}, the assumptions cover the more general case of $\bigcup_{j=1}^d \supp \rho_j$ not having full measure and construct an auxiliary Banach space of functions restricted to $\bigcup_{j=1}^d \supp \rho_j$ (see \cite[Chapter XI]{HeHe}). However, for our applications this is too general. So, we make the assumption that $\bigcup_{j=1}^d \supp \rho_j$ is a full $\frm-$measure set, and we no longer require this auxiliary Banach space construction. Furthermore, due to the discussion in \cite{AV}, the assumption that $\delta_x \in \cB^\prime$ in \cite{HeHe} can also be dropped. 
\end{oss}

\begin{oss}
Define spaces $\cB_j, j=1,\dots,d$ as follows
$$\cB_j = \{ f|_{\supp \rho_j} | f \in \cB\}$$
and equip them with the norm $\|f\| = \inf \left\{\|g\| \big|\,g\in \cB\,, g|_{\supp \rho_j}=f \right\}$\,. Then $B_j$'s are Banach lattices (see \cite{HeHe}), and because $\cB$ is an algebra, $\cB_j$'s are algebras too. 
\end{oss}

Now, we can state the abstract results in \cite{HeHe} we have used in our paper which are adapted version of \cite[Theorem 2.3]{HeHe} and \cite[Theorem 4.1]{HeHe}.

\begin{thm}\label{thm:Hehe1}
Let $\tau \in L^r(\rho_j)$ for all $j$. Suppose the Assumptions (1) to (5-r) hold. Then there exist $C>0$, $c \in (0,1)$, $\varepsilon>0$, $C^r$ maps $\nu \mapsto \lambda_{k}(\nu), \nu \mapsto K_n(\nu), \nu \mapsto \Lambda(\nu)$ such that $\lambda_k(0)=1$, $\lambda^{\prime}_k(0)= i \mu_k(\tau)$, $\lambda^{\prime\prime}_k =: \sigma^2_k \geq 0$ and such that for all $n \geq 1$,  for all $\nu \in (-\varepsilon,\varepsilon)$, for all $f \in \cB$ and $g \in \cB^{\prime}$,
\begin{equation}\label{eq:EigenSplit}
    \langle  g , \cL_\nu (f) \rangle = \sum_{k=1}^\ell\lambda_k(\nu)^n  \langle g ,\rho_k \rangle \langle \mu_k , f \rangle + \langle g , K_n(\nu)f \rangle+\langle g , \Lambda^n(\nu)f \rangle\,.
\end{equation}
In addition, the following hold. 
\begin{enumerate}
    \item $|\langle g , K_n(\nu)f \rangle| \leq C\max_k{|\lambda_k(\nu)|^n} \inf\{|t|,1\} \|g\|\|f\|$\,.
    \item $|\langle g , \Lambda^n(\nu)f \rangle| \leq C c^n \inf\{|t|,1\} \|g\|\|f\|$\,.
\end{enumerate}
\end{thm}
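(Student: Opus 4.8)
\textit{Approach.} The plan is to run the Nagaev--Guivarc'h spectral perturbation argument in the reducible form of \cite{HeHe}, using quasi-compactness of $\cL_F$ (Proposition \ref{prop:spectral}) together with Assumptions (4) and (5-r). First I would fix the structure at $\nu=0$: by Proposition \ref{prop:spectral} (exponential mixing of each $\mu_k$ forces the peripheral spectrum of $\cL_F$ on $\cB_s$ to reduce to $\{1\}$), the eigenvalue $1$ is semisimple --- no Jordan block, since $\sup_n\|\cL_F^n\|<\infty$ --- with eigenprojection $\Pi_0 f=\sum_{k=1}^\ell\mu_k(f)\rho_k$, which is indeed an $\cL_F$-commuting projection by $\mu_k(\rho_m)=\delta_{km}$ and $\mu_k\circ\cL_F=\mu_k$. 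Choose $r_0\in(r(\cL_F|_{\ker\Pi_0}),1)$ and a small circle $\Gamma$ around $1$ separating it from the rest of $\mathrm{spec}(\cL_F)$. Assumption (5-r) makes $\nu\mapsto\cL_\nu$ a $C^r$ map into $\fC(\cB_s,\cB_s)$, so for $|\nu|<\varepsilon_0$ the resolvent $(z-\cL_\nu)^{-1}$ is defined and $C^r$ in $\nu$, uniformly for $z\in\Gamma$ and for $|z|=r_0$; hence the Riesz projection $\Pi_\nu:=\frac1{2\pi i}\oint_\Gamma(z-\cL_\nu)^{-1}\,\dd z$ is $C^r$ in $\nu$, commutes with $\cL_\nu$, and has constant rank $\ell$ on a possibly smaller interval.

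Writing $\cL_\nu^n=(\cL_\nu\Pi_\nu)^n+(\cL_\nu(\Id-\Pi_\nu))^n$ (the cross terms vanish because $\Pi_\nu$ is idempotent and commutes with $\cL_\nu$), set $\Lambda^n(\nu):=(\cL_\nu(\Id-\Pi_\nu))^n$. Since $\mathrm{spec}(\cL_F(\Id-\Pi_0))\subset\{|z|\le r_0\}$ and the spectrum is upper semicontinuous in $\nu$, shrinking $\varepsilon_0$ gives $r(\cL_\nu(\Id-\Pi_\nu))\le c$ for a fixed $c\in(r_0,1)$ and all $|\nu|<\varepsilon_0$, so $\|\Lambda^n(\nu)\|\le Cc^n$ uniformly --- this is item (2).

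The heart of the matter is the finite-rank part $\cL_\nu\Pi_\nu$ on the $\ell$-dimensional space $\mathrm{Im}\,\Pi_\nu$, where non-ergodicity enters. The key point is that $\cL_\nu$ respects the ergodic decomposition: the basins $D_1,\dots,D_\ell$ of Proposition \ref{prop:spectral} satisfy $F^{-1}(D_j)=D_j$ up to $\frm$-null sets, so $\cL_\nu(\mathds 1_{D_j}u)=\mathds 1_{D_j}\cL_\nu(\mathds 1_{D_j}u)$ for every $u\in L^1$ (test against $\mathds 1_{D_j^c}$, using $\mathds 1_{D_j^c}\circ F=\mathds 1_{D_j^c}$ and the duality $\frm(v\,\cL_\nu u)=\frm((v\circ F)e^{i\nu\tau}u)$). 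Thus $\cL_\nu$ block-diagonalizes over the components, and on the $j$-th block the unperturbed eigenvalue $1$ is \emph{simple} by ergodicity of $\mu_j$. Consequently $\mathrm{Im}\,\Pi_\nu$ splits as $\bigoplus_{j=1}^\ell V_j(\nu)$ with each $V_j(\nu)$ a $C^r$-varying $\cL_\nu$-invariant line and $V_j(0)=\bC\rho_j$; write $\lambda_j(\nu)$ for the eigenvalue of $\cL_\nu$ on $V_j(\nu)$ and $\Pi_j(\nu)$ for the associated rank-one spectral projection, so that $\Pi_j(0)f=\mu_j(f)\rho_j$, $\sum_j\Pi_j(\nu)=\Pi_\nu$, $\lambda_j(0)=1$, and both $\lambda_j$ and $\Pi_j$ are $C^r$ in $\nu$. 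Then $(\cL_\nu\Pi_\nu)^n=\sum_j\lambda_j(\nu)^n\Pi_j(\nu)$, and with $K_n(\nu):=\sum_j\lambda_j(\nu)^n(\Pi_j(\nu)-\Pi_j(0))$ we obtain \eqref{eq:EigenSplit} with leading term $\sum_j\lambda_j(\nu)^n\Pi_j(0)$. Item (1) follows from $\|\Pi_j(\nu)-\Pi_j(0)\|\le\min(C|\nu|,C')$ (the $C|\nu|$ bound coming from $C^1$-dependence), which yields the factor $\inf\{|\nu|,1\}$ and the decay $\max_k|\lambda_k(\nu)|^n$.

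For the derivative identities, differentiate $\cL_\nu(\Pi_j(\nu)\rho_j)=\lambda_j(\nu)\Pi_j(\nu)\rho_j$ at $\nu=0$, pair with $\mu_j$, and use $\mu_j\circ\cL_F=\mu_j$, $\mu_j(\rho_j)=1$ and $\tfrac{d}{d\nu}\cL_\nu|_0(\cdot)=\cL_F(i\tau\,\cdot)$ to get $\lambda_j'(0)=i\mu_j(\tau)=0$ (as $\tau$ is centred). Differentiating once more, and using the symmetry $\lambda_j(-\nu)=\overline{\lambda_j(\nu)}$ (from $\cL_{-\nu}=\overline{\cL_\nu}$ since $\tau$ is real) to see $\lambda_j''(0)\in\bR$, together with the sub-Markov bound $\|\cL_\nu^n\|_{L^1}\le1$ --- whence $r(\cL_\nu)\le1$, so $|\lambda_j(\nu)|\le1$ near $0$ while $\lambda_j(0)=1$, $\lambda_j'(0)=0$ --- forces $\lambda_j''(0)\le0$; one sets $\sigma_j^2:=-\lambda_j''(0)\ge0$ and identifies it with the Green--Kubo series by expanding the second $\nu$-derivative at $0$ of $n\mapsto\mu_j(\cL_\nu^n\rho_j)$ and summing. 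I expect the third paragraph to be the main obstacle: making rigorous \emph{in} $\cB_s$ --- where the indicators $\mathds 1_{D_j}$ need not lie --- that $\mathrm{Im}\,\Pi_\nu$ really splits into $\ell$ smooth invariant lines, i.e. that the perturbation neither mixes the ergodic components nor creates a Jordan block inside $\mathrm{Im}\,\Pi_\nu$. This is precisely the bookkeeping the framework of \cite{HeHe} is built for, and the only place where the classical ergodic argument (with a simple leading eigenvalue) needs to be modified.
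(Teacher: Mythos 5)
The paper does not supply a proof of this statement: Theorem~\ref{thm:Hehe1} is cited directly as an adapted version of \cite[Theorems 2.3 and 4.1]{HeHe}, with the appendix remark pointing to the auxiliary quotient spaces $\cB_j=\{f|_{\supp\rho_j}:f\in\cB\}$ as the device that makes the non-ergodic version rigorous. Your proposal is a reconstruction of the Nagaev--Guivarc'h perturbation argument in the reducible case, and in outline it is the same method as [HeHe]: Riesz projection $\Pi_\nu$, the splitting $\cL_\nu^n=(\cL_\nu\Pi_\nu)^n+\Lambda^n(\nu)$ (with the uniform $r(\cL_\nu(\Id-\Pi_\nu))\le c<1$ from upper semicontinuity of spectrum), block-diagonalisation over the ergodic basins, and the construction $K_n(\nu)=\sum_j\lambda_j(\nu)^n(\Pi_j(\nu)-\Pi_j(0))$, with the estimate (1) coming from $\|\Pi_j(\nu)-\Pi_j(0)\|\lesssim\inf\{|\nu|,1\}$ via $C^1$-dependence and uniform boundedness. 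Your derivation of the derivative identities and the sign of $\lambda_k''(0)$ is also correct --- note in passing that the theorem statement as printed says $\lambda_k''=:\sigma_k^2\ge0$, which has a sign slip; the usage in the proof of Theorem~\ref{thmB} and the convention you adopt, $\sigma_k^2=-\lambda_k''(0)\ge0$, is the consistent one.

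The genuine gap is the one you flag yourself, and it is not merely ``bookkeeping''. The $L^1$-level observation that $\cL_\nu(\mathds{1}_{D_j}u)$ is supported in $D_j$ is correct, but it does not by itself force $\operatorname{Im}\Pi_\nu$ to split into $\ell$ smooth $\cL_\nu$-invariant lines inside $\cB_s$: the indicators $\mathds{1}_{D_j}$ need not belong to $\cB_s$, the resolvent of $\cL_\nu$ on $L^1$ need not inherit a spectral gap, and the basins $D_j$ need not be topologically separated (so one cannot argue by locality of supports of the perturbed eigenfunctions either). To turn your $L^1$-block structure into a statement about the finite-rank part of $\cL_\nu$ on $\cB_s$, one must work with the restricted spaces $\cB_j$ (each a Banach lattice/algebra in its own right), show that $\cL_\nu$ acts on each $\cB_j$, and run the simple-eigenvalue perturbation there before reassembling. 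That is precisely the content of \cite[Chapter XI]{HeHe} that the paper relies on; without it the third paragraph of your argument does not close.
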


\begin{thm}\label{thm:Hehe2} Let $\tau \in L^r(\rho_j)$ for all $j$ and assume there do not exist $c$ and $\xi \in  L^r(\rho_j)$ such that $\tau|_{\supp \rho_j} = c + \xi\circ F - \xi$. Suppose the Assumptions (1) to (5-r) hold
\begin{itemize}
    \item with $r=2$, then the general CLT \eqref{CLT} holds.  
    \item with $r=3$, then the general Berry-Esseen \eqref{BEst} estimate holds.
\end{itemize}
Further assume that for all $j$ there do not exist $a>0$ and $b \in \bR$ and $\xi \in  L^r(\rho_j)$ such that $\tau|_{\supp \rho_j} - \xi\circ F + \xi$ is $a\bZ+b$ valued. Suppose the Assumptions (1) to (5-r) hold with $r=2$, then the general LLT \eqref{fLLT} holds. 
\end{thm}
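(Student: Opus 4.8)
The plan is to run the Nagaev--Guivarc'h spectral argument of \cite{HeHe} on top of Theorem \ref{thm:Hehe1}; the only point needing attention beyond that reference is that the simplifications recorded in the preceding remarks (the set $\bigcup_j\supp\rho_j$ having full $\frm$-measure, and $\delta_x\in\cB'$ not being required) leave the arguments intact. Everything starts from the duality identity $\bE_m(e^{i\nu\tau_n})=\langle\frm,\cL_\nu^n f_m\rangle$, into which we insert \eqref{eq:EigenSplit}: writing $c_k=\frm(\rho_k)\mu_k(f_m)$,
\[
\bE_m\big(e^{i\nu\tau_n}\big)=\sum_{k=1}^{\ell}\lambda_k(\nu)^n c_k+\langle\frm,K_n(\nu)f_m\rangle+\langle\frm,\Lambda^n(\nu)f_m\rangle,
\]
and evaluating at $\nu=0$ gives $\sum_k c_k=\langle\frm,\Pi f_m\rangle=\frm(f_m)=1$.

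For the CLT \eqref{CLT}, replace $\nu$ by $\nu/\sqrt n$ and use $\lambda_k(0)=1$, $\lambda_k'(0)=i\mu_k(\tau)=0$ (centredness on each ergodic component), together with the second-order coefficient from Theorem \ref{thm:Hehe1}, to get $\lambda_k(\nu/\sqrt n)^n\to e^{-\sigma_k^2\nu^2/2}$; the two remainders vanish pointwise because the Theorem \ref{thm:Hehe1} bounds carry the decaying factor $\max_k|\lambda_k(\nu/\sqrt n)|^n$ and the factor $\inf\{|\nu|/\sqrt n,1\}\to0$. Thus $\bE_m(e^{i\nu\tau_n/\sqrt n})\to\sum_k c_k e^{-\sigma_k^2\nu^2/2}$, the characteristic function of a convex combination of centred Gaussians, and since $\sigma_k^2>0$ for every $k$ by the non-coboundary hypothesis on the corresponding basin (a degenerate component would display $\tau|_{\supp\rho_k}$ as an $L^r(\rho_k)$-coboundary via the usual telescoping argument), L\'evy's continuity theorem gives \eqref{CLT}. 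For the Berry--Esseen bound \eqref{BEst}, the hypothesis $r=3$ upgrades this to $\lambda_k(\nu/\sqrt n)^n=e^{-\sigma_k^2\nu^2/2}(1+O(|\nu|^3/\sqrt n))$ uniformly on $|\nu|\le\varepsilon\sqrt n$, with $|\lambda_k(\nu/\sqrt n)|\le e^{-c\nu^2/n}$ there; feeding this and the $K_n,\Lambda^n$ bounds into Esseen's smoothing inequality with cut-off $T=\varepsilon\sqrt n$ — so the entire integral stays in the perturbative range and no information away from $\nu=0$ is needed — yields the rate $O((\||m|\|+1)/\sqrt n)$.

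For the LLT \eqref{fLLT} the non-lattice hypothesis enters through the abstract analogue of Lemma \ref{lem:NoUnitEigen}: if $\cL_\nu h=e^{i\theta}h$ for some $\nu\neq0$ and $h\in\cB$, then $|h|$ is a non-negative $\cL_F$-fixed point, hence $|h|=\sum_j\alpha_j\rho_j$ with $|h|>0$ on some $\supp\rho_{j_0}$; writing $h=|h|e^{ig}$ with $g$ continuous there and comparing $\cL_\nu h$ with $\cL_F|h|$ forces $\nu\tau+g-g\circ F-\theta\in2\pi\bZ$ on $\supp\rho_{j_0}$, contradicting the lattice-free assumption on that basin. Hence $r(\cL_\nu)<1$ for all $\nu\neq0$, and by quasi-compactness together with assumption (5-r) this is uniform on compact sets avoiding the origin, so $|\bE_m(e^{i\nu\tau_n})|\le Cc^n$ there. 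One then inverts the Fourier transform in $\sqrt n\,\bE_m(g(\tau_n-z))=\frac{\sqrt n}{2\pi}\int\widehat g(\nu)e^{-i\nu z}\bE_m(e^{i\nu\tau_n})\,d\nu$: after rescaling $\nu\mapsto\nu/\sqrt n$, the range $|\nu|\le\varepsilon\sqrt n$ produces $\frm(g)\sum_k c_k\fn(z/(\sigma_k\sqrt n))$ in the limit via the eigenvalue expansion, while $|\nu|>\varepsilon\sqrt n$ is exponentially small by the bound just obtained — first for $g$ with integrable $\widehat g$, and then for general $g\in C^0(\bR)$ with $g(z)=o(z^{-2})$ by the truncation argument of \cite{HeHe}.

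The step I expect to be the main obstacle is the LLT: extracting the uniform spectral-radius bound away from $\nu=0$ from the purely abstract non-lattice hypothesis, and pushing the Fourier-inversion argument through for the full class of test functions $g$ allowed in \eqref{fLLT}, whose non-compactly-supported members require a careful truncation/mollification. The CLT and Berry--Esseen parts use only the local behaviour of the leading branches $\lambda_k$ and amount to bookkeeping once Theorem \ref{thm:Hehe1} is in hand.
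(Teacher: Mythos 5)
The paper does not actually prove this theorem: Theorems \ref{thm:Hehe1}--\ref{thm:Hehe3} are introduced with the words ``adapted version of [Theorem 2.3, HeHe] and [Theorem 4.1, HeHe]'' and left as citations. So the correct benchmark for your argument is the Nagaev--Guivarc'h proof carried out in \cite{HeHe}, and your sketch does reproduce it faithfully: the identity $\bE_m(e^{i\nu\tau_n})=\langle\frm,\cL_\nu^n f_m\rangle$, insertion of the eigenvalue splitting \eqref{eq:EigenSplit}, the verification $\sum_k c_k=1$ at $\nu=0$, second-order Taylor expansion of the $\lambda_k$ for the CLT with $\sigma_k^2>0$ guaranteed by the non-coboundary hypothesis, Esseen smoothing with cut-off $T=\varepsilon\sqrt n$ for Berry--Esseen, and Fourier inversion split between $|\nu|\le\varepsilon\sqrt n$ and the complement for the LLT are exactly the ingredients of \cite[Ch.~IV--VI]{HeHe}.

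One correction you should make to the LLT part. You write ``$h=|h|e^{ig}$ with $g$ continuous'', importing the argument of Lemma~\ref{lem:NoUnitEigen}, but that lemma uses the concrete fact $\cB_s\subset\cC^0(\bT^2)$. In the abstract setting of Theorem~\ref{thm:Hehe2} the space $\cB$ is only assumed to be a Banach lattice and algebra of measurable functions, so $h$ need not be continuous. What the lattice axiom ($f\in\cB\Rightarrow|f|\in\cB$) does give you is that $|h|\in\cB$ with $\cL_F|h|=|h|$, hence $|h|=\sum_j\alpha_j\rho_j$; on $\{\,|h|>0\,\}\cap\supp\rho_{j_0}$ the phase $g$ can only be taken \emph{measurable}. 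This is still sufficient: $g$ is bounded, hence $g\in L^\infty\subset L^r(\rho_{j_0})$, which is precisely the class of potential $\xi$ excluded by the non-lattice hypothesis in the statement, so the contradiction goes through. In other words, the abstract non-lattice condition is deliberately stated in $L^r(\rho_j)$ rather than $\cC^0$ to make this step work without continuity; you should say ``measurable'' rather than ``continuous'' here. Beyond that, the final ``main obstacle'' you flag --- the uniform bound $r(\cL_\nu)<1-\delta$ on compact sets $K\not\ni0$ --- follows from upper semicontinuity of the spectrum together with continuity of $\nu\mapsto\cL_\nu$ (assumption (5-$r$)) and quasi-compactness; this is a lemma in \cite{HeHe} and need not be reproved here. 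Otherwise your argument is sound and coincides with the one the paper is invoking.
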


\begin{thm}\label{thm:Hehe3} Let $\tau \in L^r(\rho_j)$  and assume that for all $j$ there do not exist $a>0$ and $b \in \bR$ and $\xi \in  L^r(\rho_j)$ such that $\tau|_{\supp \rho_j} - \xi\circ F + \xi$ is $a\bZ+b$ valued. Suppose the Assumptions (1) to (5-r) hold with $r=2$, then the general LLT \eqref{fLLT} holds.
\end{thm}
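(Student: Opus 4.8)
The plan is to reproduce, in the setting of several ergodic components, the Fourier-inversion proof of the local limit theorem (Stone's method in its transfer-operator form, cf.\ \cite[Chapter XI]{HeHe}). By a routine smoothing argument it is enough to prove \eqref{fLLT} for $g$ whose Fourier transform $\widehat g$ is continuous and compactly supported, say $\supp\widehat g\subset[-L,L]$; the passage from this class to a general bounded continuous $g$ with $g(z)=o(z^{-2})$ is obtained by convolving with the law of a small auxiliary random variable as in the proof of \Cref{thmB}, the $o(z^{-2})$ tail being exactly what makes the approximation error uniform in the shift $z$. Coding the characteristic function of $\tau_n$ as $\bE_m(e^{i\nu\tau_n})=\frm(\cL_\nu^n f_m)$ (duality of the transfer operator) and applying Fourier inversion,
\[
\sqrt n\,\bE_m\!\big(g(\tau_n-z)\big)=\frac{\sqrt n}{2\pi}\int_{-L}^{L}\widehat g(\nu)\,e^{-i\nu z}\,\frm\big(\cL_\nu^n f_m\big)\,\dd\nu ,
\]
so everything reduces to understanding $\frm(\cL_\nu^n f_m)$ on $[-L,L]$, which I would split into $|\nu|<\varepsilon$ and $\varepsilon\le|\nu|\le L$.

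On $|\nu|<\varepsilon$ I would insert the spectral decomposition \eqref{eq:EigenSplit} of \Cref{thm:Hehe1} and substitute $\nu=s/\sqrt n$, which produces the normalising factor $\sqrt n$. Since $\lambda_k(0)=1$, $\lambda_k'(0)=i\mu_k(\tau)=0$ (the observable being centred) and $\lambda_k''(0)=\sigma_k^2$, a Taylor expansion gives $\lambda_k(s/\sqrt n)^n\to e^{-\sigma_k^2 s^2/2}$ uniformly for $s$ in compact sets; dominated convergence, together with $\widehat g(0)=\frm(g)$ and the identification $c_k=\frm(\rho_k)\mu_k(f_m)$, then produces the Gaussian main term of \eqref{fLLT}. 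The contributions of $K_n(\nu)$ and $\Lambda^n(\nu)$, estimated via the bounds in \Cref{thm:Hehe1} and integrated against $\widehat g(s/\sqrt n)/\sqrt n$, are $o(1)$ uniformly in $z$ — the same $O(1/\sqrt n)$ computation as in the proof of \Cref{thmB}.

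On $\varepsilon\le|\nu|\le L$ I would use the non-arithmeticity hypothesis. Running the argument of \Cref{lem:NoUnitEigen} on each restricted algebra $\cB_j$, a unimodular eigenfunction of $\cL_\nu$ would force $\tau|_{\supp\rho_j}$ to be $a\bZ+b$-valued modulo an $L^r(\rho_j)$ coboundary, which is excluded; combined with quasi-compactness of $\cL_\nu$ (in the applications of this paper supplied by the DFLY inequality of \Cref{lem:LYnu}) and upper semicontinuity of the spectral radius in $\nu$, this gives $\sup_{\varepsilon\le|\nu|\le L}\|\cL_\nu^n\|\le Cc^n$ for some $c\in(0,1)$. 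Hence this portion of the integral is exponentially small, a fortiori $o(1/\sqrt n)$.

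Putting the two ranges together over the bounded domain $[-L,L]$ and then undoing the smoothing approximation yields \eqref{fLLT}. The main obstacle is propagating uniformity in the shift $z$ through the whole argument — both in the smoothing/approximation step, where the $o(z^{-2})$ decay is the essential input, and in the dominated-convergence estimates near $\nu=0$. The only genuinely new feature compared with the classical ergodic LLT is the $\ell$-dimensional peripheral spectrum of $\cL_F$: one must track the $\ell$ analytic branches $\nu\mapsto\lambda_k(\nu)$ emanating from $1$, verify that the non-arithmeticity condition imposed on \emph{each} ergodic component is exactly what keeps $r(\cL_\nu)<1$ away from the origin, and read off the weights $c_k$ correctly.
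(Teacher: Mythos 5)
The paper does not prove Theorem~\ref{thm:Hehe3}; it is quoted verbatim (in ``adapted'' form) from Hennion--Herv\'e, and the appendix explicitly presents it as a black-box imported result. So your proposal is being compared not against a proof in the paper but against a citation.

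That said, your sketch is the standard Stone/Fourier-inversion argument, and it is essentially the one carried out in \cite[Chapter~XI]{HeHe}, so the route is correct: smoothing reduces to $\widehat g$ of compact support, Fourier inversion turns $\sqrt n\,\bE_m(g(\tau_n-z))$ into an integral of $\frm(\cL_\nu^n f_m)$, the near-origin range is handled by the multi-branch perturbation expansion of \Cref{thm:Hehe1} (this is exactly where $r=2$ in assumption (5-$r$) is consumed, to Taylor-expand $\lambda_k$ to second order), and the range $\varepsilon\le|\nu|\le L$ is killed by exponential decay of $\|\cL_\nu^n\|$. The weights $c_k=\frm(\rho_k)\mu_k(f_m)$ fall out of the spectral decomposition. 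All of this is faithful to the source.

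One point deserves flagging, and you partly flag it yourself. As stated, the paper's abstract assumptions (1)--(5-$r$) only require $\cL_\nu$ to be \emph{bounded} for $\nu\neq0$; nothing in them alone forces quasi-compactness or $r(\cL_\nu)<1$ away from the origin. Your argument fills this hole by appealing to \Cref{lem:LYnu} and \Cref{lem:NoUnitEigen}, but those are results about the concrete SVPH transfer operator, not consequences of the abstract axioms. In the abstract setting, HeHe actually imposes an additional ``uniform ergodicity'' or Doeblin--Fortet condition for $\nu$ in compact sets disjoint from $0$, and the non-arithmeticity hypothesis then excludes peripheral eigenvalues component-by-component; it is this combination, not conditions (1)--(5-$r$) alone, that yields $\sup_{\varepsilon\le|\nu|\le L}\|\cL_\nu^n\|\le Cc^n$. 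Since the paper's phrasing of the abstract framework is informal on exactly this point, and you correctly identify what supplies the missing ingredient in the applications of the paper, this is a minor gap in attribution rather than a gap in the mathematics. Otherwise the proposal is sound.
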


\section{Differential operators}
In this section we proof a Lemma about the commutation of the twisted transfer operator and the differential operators.

Before giving the statement, let us recall some notation and formulas derived in \cite{CaLi} for the commutator of the transfer operator $\cL_F$ with differential operators. All the derivatives in the following are meant in the weak sense. For a smooth function $u:\bR^2\to \bR$, a diffeomorphism $F:\bR^2\to \bR^2$ and a vector $v \in \bR^2$, 
define
\[
\begin{split}
&\partial_v u:=\langle \nabla u, v \rangle,		\\
&\partial_{{F^*}v} u:=\langle \nabla u, (DF)^{-1}v \rangle
\end{split}
\]
viewed as functions from $\bR^2$ to $\bR$. In addition, for any multi-index $\alpha=(\alpha_1,..,\alpha_k)\in \{1,2\}^k$ with $k\le s$, we define the linear operator $\partial^\alpha$ acting on $\cC^\infty(\bT^2)$ as
\[
\partial^\alpha u=\partial_{x_{\alpha_k}} \cdots \partial_{x_{\alpha_1}} u.
\]

Then, by \cite[Corollary 6.3]{CaLi},
\begin{equation}\label{eq: LnP=PLn}
\partial^{\alpha} \cL_F^n u=\cL_F^n P^{\alpha}_{n, k} u
\end{equation}
where\footnote{We denote by $\{e_{i}\}_{i=1}^2$ the usual orthonormal base of $\bR^2$.}
\begin{equation}\label{definition of P}
\begin{split}
&P^{\alpha}_{n, 0}u=u,\\
& P^{\alpha}_{n, 1}u=A_n^{\alpha_1}u-A_n^{\alpha_1}J_n\cdot u,\\
&\quad \vdots 
\\
&P^{\alpha}_{n,k}u=A^{\alpha}_{n,1}u
-\sum_{j=1}^{k}A^{\alpha}_{n,j+1}((A^{\alpha_{j}}_n J_n)
\cdot  P^{\alpha}_{n, j-1}u) \quad \textit{ for } k\ge 2,
\end{split}
\end{equation}
with $A_n^{\alpha_j}=\partial_{F^{n^*}e_{\alpha_j}}$, $A_{n,j}^{\alpha}:=A_n^{\alpha_k}\cdots A_n^{\alpha_{j}}$, $A_{n, 
k+1}^{\alpha}=\text{Id}$ and $J_n:=\log |\det DF^n|$. For example, for  $s=1$, $\alpha_j$ corresponds to $\partial_{x_j}$ and we have
\begin{equation}\label{Lprime}
\begin{split}
\partial_{x_j} \mathcal{L}_F^n u= \mathcal{L}_F^n\left(\partial_{F^{n^*}e_j} u -\partial_{F^{n^*}e_j}  J_n\cdot  u\right)=\mathcal{L}_F^n \left( P^{\alpha}_{n, 1}u\right).
\end{split}
\end{equation}

Note that there exists $\Lambda>0$ such that, setting $d_{j,i}=\langle (DF^n)^{-1}e_{j}, e_j\rangle$, then $A_n^{\alpha_j} =\sum_{i=1}^2d_{j,i}\partial_{x_j}$ (i.e. $A_n^{\alpha_j} $ are differential operators) and   
\begin{equation}\label{diffop}
\|d_{j,i}\|_{\cC^k}\le \|(DF^{n})^{-1}\|_{\cC^k}\le \Lambda^{n},
\end{equation}
for each $1\le k\le s.$

\begin{lem}\label{lem:commute}
For each $s \ge 1, n\in \bN$ and for each $k\in \{1,..,s\}$, there exist\\ $m_k, \{a_{j,k}\}_{j\le m_k},\{ \beta_{j,k}\}_{j\le m_k}, \{\gamma_{j,k}\}_{j\le m_k}\in \bN$, with $j_k\le m_k$, and  operators $\{\cD^u_{j,k}\}_{j\le m_k}$ and $\{\cD^\tau_{j,k}\}_{j\le m_k}$  such that, for each $\nu$ and $\alpha$,
\begin{equation}\label{commute}
\partial^\alpha \cL^n_\nu u=\cL_F^n(e^{i\nu \tau_n}P^{\alpha}_{n,s}u)+\cL^n_F\left( e^{i\nu\tau_n}\sum_{k=1}^s (i\nu)^k\sum_{j=1}^{m_k}  a_{j, k}(\cD^u_{j,k}u)^{\beta_{j,k}}(\cD^\tau_{j,k}\tau_n)^{\gamma_{j,k}}\right).
\end{equation}
Moreover, for each $k\in \{1,..,s\}$ and $j\le m_k$, $\cD^u_{j,k}$ are differential operators of order smaller than $s-1$ and $\cD^{\tau}_{j,k}$ are differential operators of order smaller than $s$. Finally, the $\cC^k$ norm of the coefficients of $\cD^u_{j,k}$ and $\cD^{\tau}_{j,k}$ is bounded by $C_{s} \Lambda^n$, for some $C_s>0$.
\end{lem}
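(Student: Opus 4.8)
The statement is essentially a bookkeeping identity for differential operators, and the plan is to derive it from the commutation formula \eqref{eq: LnP=PLn} by means of a conjugation trick.

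First I would reduce to an identity not involving $\cL_\nu$: iterating the transfer-operator relation $\phi\cdot\cL_F\psi=\cL_F((\phi\circ F)\psi)$ gives $\cL_\nu^n u=\cL_F^n(e^{i\nu\tau_n}u)$, which is just \eqref{L^n}. Applying $\partial^\alpha$ and invoking \eqref{eq: LnP=PLn} with $e^{i\nu\tau_n}u$ in place of $u$ (all operators being $\bR$-linear, extended $\bC$-linearly), I get
\[
\partial^\alpha\cL_\nu^n u=\cL_F^n\bigl(P^\alpha_{n,s}(e^{i\nu\tau_n}u)\bigr),
\]
so it remains to expand $P^\alpha_{n,s}(e^{i\nu\tau_n}u)$ into $e^{i\nu\tau_n}$ times $P^\alpha_{n,s}u$ plus the asserted correction.

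Next I would use that conjugation by the nowhere-vanishing smooth function $e^{i\nu\tau_n}$ is an algebra automorphism of the ring of smooth-coefficient differential operators, fixing every multiplication operator and sending each derivation $A_n^{\alpha_j}$ to $A_n^{\alpha_j}+i\nu(A_n^{\alpha_j}\tau_n)\cdot$ (because $e^{-i\nu\tau_n}A_n^{\alpha_j}(e^{i\nu\tau_n}v)=A_n^{\alpha_j}v+i\nu(A_n^{\alpha_j}\tau_n)v$). Since $P^\alpha_{n,s}$ is, by \eqref{definition of P}, assembled from the derivations $A_n^{\alpha_j}$ and from multiplications by the fixed functions $A_n^{\alpha_i}J_n$, applying the automorphism yields $P^\alpha_{n,s}(e^{i\nu\tau_n}u)=e^{i\nu\tau_n}\widetilde P^\alpha_{n,s}u$, where $\widetilde P^\alpha_{n,s}$ is obtained from $P^\alpha_{n,s}$ by the substitution $A_n^{\alpha_j}\mapsto A_n^{\alpha_j}+i\nu(A_n^{\alpha_j}\tau_n)$. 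Then I would expand the binomials in $\widetilde P^\alpha_{n,s}$ and, via the Leibniz rule, collect each resulting composition of first-order derivations and multiplications into a finite sum of monomials of the form (coefficient function)$\times$(product of derivatives of $\tau_n$)$\times$(derivative of $u$): the $\nu^0$-part is precisely $P^\alpha_{n,s}u$, and for $1\le k\le s$ the $\nu^k$-part consists of terms carrying at least one derivative of $\tau_n$ (of order between $1$ and $s$), a derivative of $u$ of order at most $s-1$ (one derivation of $P^\alpha_{n,s}$ having been diverted onto $\tau_n$), and smooth coefficients that are products of the entries $d_{j,i}$ of $(DF^n)^{-1}$, their derivatives up to order $s$, and derivatives up to order $s$ of $J_n=\sum_{i=0}^{n-1}\log|\det DF|\circ F^i$. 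Grouping equal factors and relabelling puts this in the form \eqref{commute}, and by \eqref{diffop} together with the analogous chain-rule bound $\|J_n\|_{\cC^s}\le C_s\Lambda^n$ (with the same $\Lambda$, possibly enlarged) the $\cC^k$ norms of the coefficients of $\cD^u_{j,k}$ and $\cD^\tau_{j,k}$ are $\le C_s\Lambda^n$.

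The argument carries no real analytic content; the only delicate point will be organising the Leibniz expansion so that the differential orders of the $\cD^u_{j,k}$, $\cD^\tau_{j,k}$ and the size of their coefficients are transparent. The conjugation step is what makes this manageable, since it isolates the single new effect — a derivative hitting $e^{i\nu\tau_n}$ produces a factor $i\nu\,(A_n^{\alpha_j}\tau_n)$ — while leaving the already-understood structure of $P^\alpha_{n,s}$ intact.
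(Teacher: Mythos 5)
Your proposal is correct and, at bottom, rests on the same two ingredients as the paper's proof — the commutation identity \eqref{eq: LnP=PLn} and the elementary fact that a derivation $A$ acting on $e^{i\nu\tau_n}v$ produces $e^{i\nu\tau_n}(Av+i\nu(A\tau_n)v)$ — but you organise them differently. The paper differentiates $\cL_\nu^n u$ one derivative at a time, using \eqref{Lprime} and the one-step formula \eqref{eq:Palpha}, writes out the cases $s=1$ and $s=2$ explicitly, and leaves the general case as a Leibniz iteration to the reader; you instead push all derivatives through at once via \eqref{eq: LnP=PLn} applied to $e^{i\nu\tau_n}u$ and then compute $P^\alpha_{n,s}(e^{i\nu\tau_n}u)=e^{i\nu\tau_n}\widetilde P^\alpha_{n,s}u$ by observing that conjugation by $e^{i\nu\tau_n}$ fixes multiplication operators and sends each $A_n^{\alpha_j}$ to $A_n^{\alpha_j}+i\nu(A_n^{\alpha_j}\tau_n)$, which is legitimate because, by \eqref{definition of P}, $P^\alpha_{n,s}$ is generated by exactly these building blocks. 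Your route treats all $s$ uniformly and makes transparent why, in every $\nu^k$-term with $k\ge 1$, at most $s-1$ derivatives land on $u$ while at most $s$ land on $\tau_n$; the paper's route is more concrete and exhibits the explicit operators for $s\le 2$ that are actually quoted later. Two small caveats: for $s\ge 3$ the expansion produces products of \emph{different} derivatives of $\tau_n$, which do not literally have the single-power form $(\cD^\tau_{j,k}\tau_n)^{\gamma_{j,k}}$ in \eqref{commute}, so your "grouping and relabelling" must be understood as assigning a separate index $j$ to each such product (the paper, stopping at $s=2$, never meets this point); and for the coefficient bound you rightly note — more explicitly than the paper, which only cites \eqref{diffop} — that derivatives of $J_n$ enter through the multipliers $A_n^{\alpha_j}J_n$, but your chain-rule estimate on $\|J_n\|_{\cC^s}$ gives an exponential bound with a possibly larger base, so the conclusion holds with "some" $\Lambda$ as in \eqref{diffop} rather than literally the same one. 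Neither point is a gap beyond the level of detail the paper itself supplies.
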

\begin{proof}
We are going to show the cases $s=1$ and $s=2$. The general case is just a straightforward generalization of those two using Leibniz formula for higher derivatives, and it is left to the reader. \\
For each vector $v\in \bR^2$
\[
\partial_{F^{n^*}v}(e^{i\nu\tau_n} u)=e^{i\nu\tau_n}(\partial_{F^{n^*}v} u+i\nu u \partial_{F^{n^*}v} \tau_n
),\]
and, for $\alpha=\alpha_1$,
\begin{equation}\label{eq:Palpha}
P^\alpha_{n,1}(e^{i\nu \tau_n} u)=e^{i\nu\tau_n}(P^\alpha_{n,1} u+i\nu A^{\alpha}_n \tau_n).
\end{equation}
Therefore, using $\cL^n_\nu(u)=\cL_F^n(e^{i\nu\tau_n}u)$ and \eqref{Lprime},  we have 
\[
\begin{split}
\partial_{x_1} \mathcal{L}_{\nu}^n u&= \mathcal{L}_F^n\left(e^{i\nu\tau_n}(\partial_{F^{n^*}e_1} u -\partial_{F^{n^*}e_1}  J_n\cdot  u)\right)+i\nu \cL_F^n(e^{i\nu\tau_n} u \cdot \partial_{F^{n^*}e_1} \tau_n )\\
&=\mathcal{L}_F^n\left(e^{i\nu\tau_n}P^{\alpha}_{n, 1}u\right)+i\nu \cL_F^n(e^{i\nu\tau_n} u \cdot A_n^{\alpha}\tau_n ).
\end{split}
\]
An analogous computation yields the formula for $x_2$. We can thus obtain \eqref{commute} with 
\[
 m_1=a_{1,1}=\beta_{1,1}=\gamma_{1,1}=1,\quad \cD^u_{1,1}=\Id,\quad\cD^\tau_{1,1}=A^{\alpha}_n.
\]
The bound on the norm of coefficients of $A^{\alpha}_n$ is given by \eqref{diffop}.\\
Differentiating again, for example in the $x_1$-direction, and using the above formula for the first derivative and \eqref{Lprime}, we have
\[
\begin{split}
\partial_{x_1}^2 \cL^n_\nu u&= \partial_{x_1}(\mathcal{L}_F^n\left(e^{i\nu\tau_n}P^{\alpha}_{n, 1}u\right))+i\nu \partial_{x_1}(\cL_F^n(e^{i\nu\tau_n} u \cdot A_n^{\alpha}\tau_n ))\\
&=\cL_F^n(P^{\alpha}_{n,1}(e^{i\nu \tau_n}P^{\alpha}_{n,1}u))+i\nu \cL_F^n(P^\alpha_{n,1}(e^{i\nu\tau_n} u \cdot A_n^{\alpha}\tau_n) ).
\end{split}
\]
Hence, using \eqref{eq:Palpha}, we obtain
\[
\begin{split}
\partial_{x_1}^2 \cL^n_\nu u&=\cL_F^n(e^{i\nu \tau_n}P^{\alpha}_{n,1}(P^{\alpha}_{n,1}u))\\
&+\cL^n_F\left( e^{i\nu\tau_n}(2i\nu (P^\alpha_{n,1}u)\cdot(A_{n,1}^\alpha \tau_n) +i\nu u\cdot P^{\alpha}_{n,1}(  A_{n,1}^\alpha \tau_n)-\nu^2 u\cdot ( A_{n,1}^\alpha \tau_n)^2 ) \right).
\end{split}
\]
A similar computation holds for the derivative in the $x_2$-direction.
So we obtain \eqref{commute} with $m_1=2, m_2=1$ and
\[
(a_{1,1}, a_{2,1}, a_{1,2})=(2,1,1), \quad (\beta_{1,1},\beta_{2,1}, \beta_{1,2})=(1,1,0), \quad (\gamma_{1,1},\gamma_{2,1}, \gamma_{1,2})=(1,1,2)
\]
and
\[
\begin{split}
&(\cD_{1,1}^u, \cD_{2,1}^u, \cD_{1,2}^u)=(P^\alpha_{n,1}, Id,Id)\\
&(\cD_{1,1}^\tau, \cD_{2,1}^\tau, \cD_{1,2}^\tau)=(A^\alpha_{n,1}, P_{n,1}^\alpha \circ A^\alpha_{n,1},A^\alpha_{n,1}).
\end{split}
\]
The $\cD_{j,k}^u$ operators, where $k\in\{1,2\}$, are up to order $1$, while the $\cD_{j,k}^u$ operators are up to order $2$. The bound on the norm of the coefficients is given again by \eqref{diffop}.
\end{proof}
\section{Complex Banach spaces}\label{app:complex}
We recall here a standard procedure to complexify a Banach space $(\cB,\|\cdot \|)$. See \cite[Section I]{Sin} for the details. We consider a real vector space $\cB^2$ and for $(x,y)\in \cB^2$ we define the multiplication by $a+ib \in \bC$ as $(a+ib)\star (x,y)=(ax-by, ay+bx)$. We can thus define the norms
\[
\begin{split}
&\|(x,y)\|_{\cB, \bR}=\sqrt{\|x\|^2+\|y\|^2}\\
&\|(x,y)\|_{\cB, \bC}=\sup_{\theta \in [0,2\pi]}\|e^{i\theta} \star (x,y)\|_{\cB,\bR}.
\end{split}
\] 
The space $(\cB^2, \|\cdot\|_{\cB,\bR})$ is a real Banach space and we can define the complex Banach space $(\bB, \|\cdot\|_{\cB,\bC})$ whose elements are $x+iy$, $x,y \in \cB$. The following easy inequalities hold:
\begin{equation}\label{eq: complex}
\begin{split}
&\|f\|_{\cB,\bC}\le 2\sqrt 2 \|f\|_{\cB,\bR}\\
&\|x\|+\|y\| \le \sqrt 2 \|(x,y)\|_{\cB,\bR}\le \sqrt 2 \|x+iy\|_{\cB,\bC}.
\end{split}
\end{equation}

\section{Previous results on  \texorpdfstring{$F$}{Lg} and \texorpdfstring{$\cL_F$}{Lg}  }
For the convenience of the reader, in this Appendix we collect some consequences of several results obtained in \cite{CaLi} for the SVPH system $F$ and the associated transfer operator $\cL=\cL_F$ which are used throughout the paper.\\

\begin{itemize}
\item There are constant cones $\fC_u \supset e_1$ (which is $DF$-invariant) and $\fC_c \supset e_2$ (which is $(DF)^{-1}$-invariant), numbers $\lambda> \lambda_c>1$ and $C_1,C_2>0$ such that 
\begin{equation}\label{PHS}
\begin{split}
&\|DF^n|_{\fC_u}\|> C_1\lambda^{n}\\
(C_2 \lambda_c^n)^{-1}<&\|DF^n|_{\fC_c}\|<C_2 \lambda_c^n
\end{split}
\end{equation}
\item By the second equation of \cite[(5.17)]{CaLi}:
\begin{equation}\label{DF-1}
   { \|(DF^n)^{-1}\|_{\cC^0(\bT^2)}}\le C\lambda_c^n.
\end{equation}
\item 
By \cite[Prop. 5.6]{CaLi} there exists a constant $C_*\ge1$ such that, for every $z\in \mathbb{T}^2$, any $n\in\bN$, any vectors $v^u\in \mathbf{C}_u$ and $v^c\in \mathbf{C}_c$ such that $(a,b):=D_zF^{n}v^c\not\in \mathbf{C}_u$, we have :
\begin{equation}\label{det-form}
 C_*^{-1}\frac{\|D_zF^n v^u\|}{\|v^u\|}\frac{ |b|}{\|v^c\|}\le |\det D_zF^n|\le C_* \frac{\|D_zF^n_z v^u\|}{\|v^u\|}\frac{ |b|}{\|v^c\|}.
\end{equation}
In particular: there is $C_*>0$ such that for each $n\in \bN$, $z\in \bT^2$ 
\begin{equation}\label{det-est}
C_*^{-1} \lambda_c^{-n} \lambda^{n}\le |\det DF^n(z)|\le C_* \Lambda^n \lambda_c^n
\end{equation}
where $\lambda=\inf_{(x,y)\in\bT^2} \partial_x f(x,y)$ and $\Lambda=\sup_{(x,y)\in\bT^2}\partial_x f(x,y)$.\\
\item By \cite[Lemma 5.18 and Corollary 5.20]{CaLi} 
\begin{equation}\label{C1norm}
\begin{split}
&\|\cL^n 1\|_{\cC^0(\bT^2)}\le C\lambda_c^n\\
&\|\cL^n 1\|_{\cC^1(\bT^2)}\le C\lambda_c^{2n}.
\end{split}
\end{equation}\\
\item By \cite[Corollary 6.7]{CaLi}, for each $q\in \bN$ there exists $C,C_\sharp$ and $\delta_*\in (0,1)$ such that $\delta_*\lambda_c<1$ and, for each  and $u\in \cC^{r-1}(\bT^2)$, $n\in \bN$,
\begin{equation}
\label{L-Y B2} 
\begin{split}
\|\cL_F^n u\|^*_0 &\le C\lambda_c^n \|u\|^*_0 \\
\|\mathcal{L}^n_F u \|^*_{q} &\le  C_\sharp\delta_\ast^{n} \|u\|^*_{q}+ C\lambda_c^n\|u\|^*_{0}.
\end{split}
\end{equation}
    \item By \cite[Lemma 7.1]{CaLi} and \eqref{C1norm}, $\cL_F$ is bounded as an operator in $H^s(\bT^2)$ and for each $n\in \bN$ there are $C>0$ such that,
\begin{align}
&\| \cL^n u\|_{L^2}\le C\lambda_c^n \|u\|_{L^2}\label{L2 norm transfer}\\
&\|\mathcal{L}^n u\|^2_{H^s}  \le  C\lambda_c^{2(s+1)n}
\|u\|^2_{H^s}+C \lambda_c^{3n}\|u\|^2_{\cH^{s-1}}\label{LY Hs and Hs-1}.
\end{align}
Moreover, iterating \cite[(9.9)]{CaLi} there exist $C>0$ and $\hat \sigma_s\in (0,1)$ which can be chosen (see \cite[(9.3)]{CaLi} and the assumptions on the map $F$) equal to\footnote{Recall that $\xi_s=6(s+1)!$.} $\lambda_c^{\xi_s}\lambda_-^{-1}$, such that for each $n,s\in \bN$
\begin{equation}\label{LY-HB0}
\|\cL^n u\|_{H^s}\le \hat \sigma^n\|u\|_{H^s}+C\|u\|^*_{s+2}.
\end{equation}
\item By \cite[Lemma F.1, (G2) ]{CaLi} there exists\footnote{In our case the quantity $C_{\lambda_c,n}$ which appear in \cite{CaLi} can be bounded by a constant uniform in $n$ which might depends on $\lambda_c$ (see \cite[Remark 5.7]{CaLi}).} $C>1$ such that, for each $z_1,z_2\in \bT^2$, $v\in \fC^u$ unitary and $n\in \bN$
\begin{equation}\label{Lipsh}
\|D_{z_1}F^{n}v-D_{ z_2}F^{n}v\|\le L_{n}\|z_1-{z}_2\|
\end{equation}
where $L_n=C\lambda_c^{5n}$.
\end{itemize}

\end{appendix}

\end{document}